\newtheorem{proposition}{Proposition}[section]
\newtheorem{theorem}[proposition]{Theorem}
\newtheorem{lemma}[proposition]{Lemma}
\newtheorem{corollary}[proposition]{Corollary}
\newtheorem*{thm}{Theorem}
\theoremstyle{definition}
\theoremstyle{remark}
\newtheorem{remark}[proposition]{Remark}
\numberwithin{equation}{section}
\providecommand{\keywords}[1]
{
  \small
  \textbf{\textit{Keywords---}} #1
}
\begin{document}

\title{The magnetohydrodynamical system\\ in bounded non smooth domains of dimension $n\ge 3$}
\author{Sylvie Monniaux\footnote{Aix-Marseille Univ., CNRS, I2M UMR7373, Marseille, France 
- {\tt sylvie.monniaux@univ-amu.fr}} \footnote{France-Australia Mathematical Sciences and Interactions ANU - CNRS International Research Laboratory.}\footnote{https://orcid.org/0000-0002-5782-0494}}
\date{}

\maketitle

\abstract{Existence of mild solutions for the magnetohydrodynamical system in ${\mathscr{C}}^1$ domains 
is established in critical spaces in dimension $n\ge 3$. The proof relies on recent regularity results on the Stokes 
operator in ${\mathscr{C}}^1$ domains and a Leibniz-like formula for differential forms proved here in Section\,2.}

\medskip 

\keywords{Navier-Stokes, MHD system, critical spaces, differential forms, Hodge-Laplacian, Leibniz-type formula}

{\scriptsize{
\tableofcontents
}}

\section{Introduction}

The magnetohydrodynamical system in a domain $\Omega\subset{\mathds{R}}^3$ on a 
time interval $(0,T)$ ($0<T\le \infty$) as considered in \cite{ST83} (with all constants equal to 1) 
reads
\begin{equation}
\label{mhd}\tag{MHD}
\left\{
\begin{array}{rclcl}
\partial_t u-\Delta u+\nabla \pi+(u\cdot\nabla)u&=&({\rm curl}\,b)\times b
&\mbox{ in }&(0,T)\times\Omega\\
\partial_t b-\Delta b&=&{\rm curl}\,(u\times b)&\mbox{ in }&(0,T)\times\Omega\\
{\rm div}\,u&=&0&\mbox{ in }&(0,T)\times\Omega\\
{\rm div}\,b&=&0&\mbox{ in }&(0,T)\times\Omega\\
\end{array}
\right.
\end{equation}
with the following boundary conditions
\begin{equation}
\label{bc}\tag{BC}
\left\{
\begin{array}{rclcl}
u&=&0&\mbox{ on }&(0,T)\times\partial\Omega\\
\nu\cdot b&=&0&\mbox{ on }&(0,T)\times\partial\Omega\\
\nu\times {\rm curl}\,b&=&0&\mbox{ on }&(0,T)\times\partial\Omega
\end{array}
\right.
\end{equation}
and initial data
\begin{equation}
\label{ic}\tag{ID}
u(0)=u_0, \quad b(0)=b_0
\end{equation}
where $u:(0,T)\times\Omega\to{\mathds{R}}^3$ denotes the {\sl velocity} of the 
(incompressible homogeneous) fluid,
the {\sl magnetic field} (in the absence of magnetic monopole) is denoted by 
$b:(0,T)\times\Omega\to{\mathds{R}}^3$ and $\pi:(0,T)\times\Omega\to{\mathds{R}}$
is the pressure of the fluid. The first equation of \eqref{mhd} corresponds to {\sl Navier-Stokes}
equations subject to
the {\sl Laplace force} $({\rm curl}\,b)\times b$ applied by the magnetic field $b$. 
The second equation of \eqref{mhd} 
describes the evolution of the magnetic field following the so-called {\sl induction} equation.
In the boundary conditions \eqref{bc}, $\nu(x)$ represents the exterior unit normal at a point 
$x\in\partial\Omega$.

For $u$ and $b$ smooth enough with values in ${\mathds{R}}^3$,
we have the following algebraic identities:
\[
({\rm curl}\,b)\times b = -\tfrac{1}{2}\nabla|b|^2 +(b\cdot\nabla)b
\]
and
\[
{\rm curl}\,(u\times b)=(b\cdot\nabla)u-(u\cdot\nabla)b +({\rm div}\,b)\,u -({\rm div}\,u)\,b.
\]
Then \eqref{mhd} in dimension 3 can be equivalently rewritten as
\[
\left\{
\begin{array}{rclcl}
\partial_t u-\Delta u+\nabla \tilde\pi+(u\cdot\nabla)u&=& (b\cdot\nabla)b
&\mbox{ in }&(0,T)\times\Omega\\
\partial_t b-\Delta b&=&(b\cdot\nabla)u-(u\cdot\nabla)b&\mbox{ in }&(0,T)\times\Omega\\
{\rm div}\,u&=&0&\mbox{ in }&(0,T)\times\Omega\\
{\rm div}\,b&=&0&\mbox{ in }&(0,T)\times\Omega\\
\end{array}
\right.
\]
where $\tilde\pi=\pi+\frac{1}{2}\nabla|b|^2$, thanks to the divergence-free condition on $u$ and $b$. 
This formulation led authors (see \cite{Fef14, Ch16, CL18, PT26};
in the latter paper, the authors aknowledge the fact that $b$ should be a 2-form and not a vector field)
to generalise \eqref{mhd} by replacing ${\rm curl}\,b\times b$ by $(b\cdot\nabla)b$ and
${\rm curl}\,(u\times b)$ by $(b\cdot\nabla)u-(u\cdot\nabla)b$ in dimension $n\ge 3$. However,
according to first physical principles, $b$ should be considered as a 2-form which agrees with 
a vector field (via the Hodge-star operator) only in dimension 3. In this paper, we propose a formulation
of the $n$ dimensional MHD system in the language of differential forms (see Subsection~\ref{subsec:diff-form}).
This system has the same scaling invariance as the 3D \eqref{mhd}:
$u_\lambda(t,x)=\lambda u(\lambda^2t,\lambda x)$, 
$b_\lambda(t,x)=\lambda b(\lambda^2t,\lambda x)$ and
$\pi_\lambda(t,x)=\lambda^2 \pi(\lambda^2t,\lambda x)$, $\lambda>0$. This suggests that 
a critical space for $(u,b)$ is ${\mathscr{C}}([0,\infty);L^3({\mathds{R}}^3)^3)\times 
{\mathscr{C}}([0,\infty);L^3({\mathds{R}}^3)^3)$ in dimension 3, or 
${\mathscr{C}}([0,\infty);L^n({\mathds{R}}^n,\Lambda^1)\times 
{\mathscr{C}}([0,\infty);L^n({\mathds{R}}^n,\Lambda^2))$, where $\Lambda^1$ denotes the
space of 1-forms and $\Lambda^2$ the space of 2-forms in dimension $n$. This has also the
advantage to give a direct generalisation to the boundary conditions \eqref{bc} in dimension $n\ge 3$
and agrees with the 2D formulation as in \cite{AG20}.
Moreover, we will see that the condition ${\rm div}\,b=0$ in \eqref{mhd} in dimension 3 is a consequence
of the equations rather than an extra equation of the system; see Lemma~\ref{lem:divb=0}.

The purpose of this paper is to prove existence of solutions of the \eqref{mhd} system in the critical 
space in a bounded ${\mathscr{C}}^1$ domain $\Omega$ in dimension $n\ge 3$.

The first task is to describe a suitable 
version of \eqref{mhd} in dimension $n\ge 3$ with the help of differential forms
and then use the properties of the Dirichlet-Stokes operator in ${\mathscr{C}}^1$ bounded 
domains proved recently in \cite{GS25} and the properties of the so-called Maxwell operator 
established in \cite{MM09a,MM09b,McIM18}. One other important tool to make our strategy
work is a Leibniz-type ``magic formula" (see Theorem~\ref{thm:magic} below), as well as the boundedness
of the Hodge decomposition of $L^p(\Omega,\Lambda)$ for all $1<p<\infty$ 
if $\Omega\subset {\mathds{R}}^n$ is of class ${\mathscr{C}}^1$; see Theorem~\ref{thm:hodge-dec}.
This fact was known for 1-forms (Theorem~11.1 in \cite{FMM98}), but wasn't investigated for the
whole exterior algebra $\Lambda$ as far as the author is aware.

In Section~2, we collect the tools that will be of importance to prove the following
existence result (roughly presented; for a more correct statement, see 
Theorem~\ref{thm:globalexistence} and Theorem~\ref{thm:localexistence}
below):

\begin{thm}
For $u_0,b_0$ in a suitable functional space embedded into $L^n$, there is a global 
mild solution of \eqref{mhd},\eqref{bc} if $u_0,b_0$ are small enough in $L^n$ and a local mild
solution if no size restrictions are imposed on the initial data.
\end{thm}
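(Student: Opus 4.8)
The plan is to recast the system \eqref{mhd}--\eqref{bc} as an abstract semilinear evolution problem driven by two sectorial operators and then solve it by a fixed-point argument in a scaling-critical space. Applying the Helmholtz--Leray projection $\mathbb{P}$ to the first line of \eqref{mhd} removes the pressure $\pi$ and, exactly as the no-slip condition $u=0$ suggests, leaves $u$ governed by the Dirichlet--Stokes operator $A$ whose resolvent and gradient bounds in $\mathscr{C}^1$ domains are provided by \cite{GS25}; the boundary conditions $\nu\cdot b=0$ and $\nu\times\mathrm{curl}\,b=0$ identify the evolution of $b$ with the Maxwell (Hodge--Laplacian) operator $B$ analysed in \cite{MM09a,MM09b,McIM18}. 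Rewriting each nonlinear term $(u\cdot\nabla)u$, $(\mathrm{curl}\,b)\times b$ and $\mathrm{curl}\,(u\times b)$ through the Leibniz-type identity of Theorem~\ref{thm:magic} casts them as first-order differential forms acting bilinearly, so that \eqref{mhd}--\eqref{ic} becomes $\partial_t u+Au=\mathbb{P}[(\mathrm{curl}\,b)\times b-(u\cdot\nabla)u]$, $\partial_t b+Bb=\mathrm{curl}\,(u\times b)$, with data $(u_0,b_0)$. By Duhamel's principle a \emph{mild} solution is then a fixed point of
\begin{align*}
u(t)&=e^{-tA}u_0+\int_0^t e^{-(t-s)A}\,\mathbb{P}\big[(\mathrm{curl}\,b)\times b-(u\cdot\nabla)u\big](s)\,ds,\\
b(t)&=e^{-tB}b_0+\int_0^t e^{-(t-s)B}\,\mathrm{curl}\,(u\times b)(s)\,ds.
\end{align*}

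Next I would fix the functional setting. Since every nonlinearity carries one spatial derivative, the correct critical space is not $\mathscr{C}([0,T);L^n)$ alone: following Kato, I would adjoin a time-weighted norm and work in
\[
E_T:=\Big\{(u,b):\ \sup_{0<t<T}\|(u,b)(t)\|_{L^n}+\sup_{0<t<T}t^{\frac12(1-\frac nq)}\|(u,b)(t)\|_{L^q}<\infty\Big\}
\]
for a suitable exponent $q>n$. The main analytic input is then a pair of bilinear estimates: using the $L^p\!\to\!L^q$ gradient-smoothing bounds for $e^{-tA}$ from \cite{GS25} and the analogous $\mathrm{curl}$-type bounds for $e^{-tB}$ from \cite{MM09a,MM09b,McIM18}, together with Theorem~\ref{thm:magic} to keep the cross-terms within the correct form classes, I would show that each Duhamel integral defines a bounded bilinear map $\mathcal{B}_j:E_T\times E_T\to E_T$ with $\|\mathcal{B}_j((u,b),(u',b'))\|_{E_T}\le \eta\,\|(u,b)\|_{E_T}\|(u',b')\|_{E_T}$, the singular factors $(t-s)^{-\theta}s^{-\mu}$ produced by the smoothing being integrable on $(0,t)$ precisely because $q$ is chosen critically.

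Finally I would close the argument by the standard bilinear fixed-point lemma: if $\|(e^{-tA}u_0,e^{-tB}b_0)\|_{E_T}$ is small compared with $1/\eta$, the map $(u,b)\mapsto(\text{linear}+\mathcal{B}(u,b))$ is a contraction on a small ball of $E_T$ and yields the unique mild solution, which one checks lies in $\mathscr{C}([0,T);L^n)$. For small data in $L^n$ the linear part is globally small, giving the global solution of Theorem~\ref{thm:globalexistence}; for arbitrary data one exploits that $t\mapsto t^{\frac12(1-\frac nq)}\|(e^{-tA}u_0,e^{-tB}b_0)(t)\|_{L^q}\to0$ as $t\to0$ (by density of smooth fields), so that on a short interval $(0,T)$ the linear part is again small, giving the local solution of Theorem~\ref{thm:localexistence}. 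The principal obstacle is the bilinear estimate in the merely $\mathscr{C}^1$ setting: without Gaussian heat-kernel bounds the gradient/curl smoothing for $A$ and for $B$ holds only on restricted ranges of $(p,q)$, so the delicate point is to select a single exponent $q$ lying in the intersection of the ranges allowed by \cite{GS25} and \cite{MM09a,MM09b,McIM18} and to verify, via the magic formula, that the coupling terms mixing $u$-type and $b$-type variables indeed land in function spaces on which both semigroups smooth.
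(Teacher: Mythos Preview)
Your overall architecture---Duhamel formulation, Kato-type weighted space, bilinear fixed point---matches the paper. But the proposal has a genuine gap in the choice of iteration space and in what the magic formula actually does.

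The space $E_T$ you propose carries only $\sup_t\|(u,b)(t)\|_{L^n}$ and a weighted $L^q$ norm. The paper's space $\mathscr{X}_T=\mathscr{U}_T\times\mathscr{B}_T$ carries, in addition, $\sup_t\sqrt{t}\,\|\nabla u(t)\|_{L^n}$ and $\sup_t\sqrt{t}\,\|\delta b(t)\|_{L^n}$. These derivative norms are not optional. The Laplace force $\delta b\lrcorner\, b$ is estimated in $L^{2n/3}$ via $\|\delta b\|_n\|b\|_{2n}$, and the induction term $d(u\lrcorner\, b)$ is estimated in $L^{2n/3}$ via Corollary~\ref{cor:magic-est}, which reads $\|d(u\lrcorner\, b)\|_r\lesssim \|\nabla u\|_{\tilde p}\|b\|_q+\|u\|_p\|\delta b\|_{\tilde q}$ (using $db=0$). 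Without $\nabla u$ and $\delta b$ in the norm, neither term closes.

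This is also where you misread Theorem~\ref{thm:magic}: it is not a device to ``rewrite each nonlinear term'' or to keep forms in the right degree. It is used for exactly one purpose, namely to produce an $L^r$ bound on $d(u\lrcorner\, b)$ from first-order information on $u$ and $b$; the terms $(u\cdot\nabla)u$ and $\delta b\lrcorner\, b$ are handled by straightforward H\"older estimates once the derivative norms are available.

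Finally, your ``principal obstacle'' is misidentified. In a bounded $\mathscr{C}^1$ domain the admissible range is the full interval $(1,\infty)$ for both the Stokes semigroup (by \cite{GS25}) and the Hodge decompositions (Theorem~\ref{thm:hodge-dec}); there is no exponent to thread. The actual obstacle is structural: for the Hodge--Laplacian one has $L^p\to L^q$ smoothing only for $\delta e^{t\Delta_\bot}$ on $\mathsf{R}_p(\underline{d})$ and for $d e^{t\Delta_\|}$ on $\mathsf{R}_p(\underline{\delta})$ (Corollary~\ref{cor:HodgeLaplacianImprovedRegularity}), not a full-gradient bound on all of $L^p$. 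Hence the classical Kato trick of writing $d(u\lrcorner\, b)$ and absorbing the outer $d$ into the semigroup does not go through directly: you would need $\|e^{t\Delta_\bot}\underline{d}\,\|_{L^p\to L^q}$ acting on the bare $1$-form $u\lrcorner\, b$, and that estimate is not supplied by the available theory. The paper circumvents this by keeping $d(u\lrcorner\, b)$ on the right, bounding it in $L^{2n/3}$ via the magic formula, and only then applying the semigroup on $\mathsf{R}_{2n/3}(\underline{d})$.
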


As usual, the proof of this theorem will use the Picard fixed point theorem for which one of
the main challenging problems is to determine the right space (contained in
${\mathscr{C}}([0,T);L^n)$) for the solutions. 

As a final remark in this introduction, we must point out that if we restrict ourselves to domains
in dimension 3, the same existence results hold in bounded Lipschitz domains with only
a slightly modified proof, using results from \cite{S12}, \cite{T18}, \cite{FMM98}, \cite{McIM18}.

\section{Functional analytic tools}

\subsection{The Dirichlet-Stokes operator in ${\mathscr{C}}^1$ bounded domains}

The Stokes operator with homogeneous Dirichlet boundary conditions has been extensively studied
first in smooth bounded domains by Y.\, Giga (\cite{G81}, Theorem~1 and Theorem~2) and later in 
Lipschitz bounded domains by Z.\,Shen (\cite{S12}, Theorem~1.1) and P.\,Tolksdorf 
(\cite{T18}, Theorem~1.1 and Corollary~1.2). In those references, they prove in the case of smooth
domains that the Stokes semigroup is analytic (with an estimate of the gradient applied to the semigroup)
in $L^p$ for all $1<p<\infty$, and in the case of Lipschitz domains, the analyticity in $L^p$ and the estimate for
the gradient as well is proved for $p$ in an open interval around $2$ containing $[\tfrac{2n}{n+1},\tfrac{2n}{n-1}]$.
This interval in dimension $3$ corresponds to the interval for which the Helmholtz projection is bounded in $L^p$
(projection onto $L^p$ divergence-free vector fields) as proved in \cite{FMM98}, Theorem~11.1.
P.\,Deuring (\cite{D01}, Theorem~1.3) established that there is a bounded Lipschitz domain in ${\mathds{R}}^3$ 
(actually, a bounded domain with a reentrant corner) for which the Dirichlet-Stokes semigroup is not analytic in 
$L^p$ for $p$ close to $1$ or large enough.

Between smooth domains and Lipschitz domains, a class of domains that is worth studying is the class
of bounded domains with ${\mathscr{C}}^1$ boundary. For this class of domains, as mentioned in
\cite{FMM98}, Theorem~11.1, the interval of $p$ for which the Helmholtz projection is bounded in $L^p$
is the whole interval $(1,\infty)$. However, the proof of this fact is much more related to the proof in the
case of Lipschitz domains rather than smooth domains. It is only recently that Z.\,Shen with J.\,Geng
proved that the Stokes resolvent/semigroup is bounded in $L^p$ for all $p\in(1,\infty)$, with an accompanying 
estimate for the gradient of the Stokes resolvent/semigroup in $L^p$ (see \cite{GS25}, Theorem~1.1 for the 
resolvent and Corollary~1.3 for the semigroup). We recall here this result which will be used later in our paper.

\begin{thm}[Theorem~1.1 \& Corollary~1.3 in \cite{GS25}]
Let $\Omega\subset {\mathds{R}}^n$ ($n\ge 3$) be a bounded ${\mathscr{C}}^1$ domain. Then the
Stokes operator with Dirichlet boundary conditions on the space ${\mathbb{L}}^q_\sigma(\Omega)$ defined as
$\{u\in L^q(\Omega;{\mathds{R}}^n) ; {\rm div}\,u=0 \text{ in }\Omega \text{ and } \nu\cdot u=0\text{ on }\partial\Omega\}$
$S_q$ is the negative generator of a uniformly bounded analytic semigroup $(e^{-tS_q})_{t\ge0}$ in
${\mathbb{L}}^q_\sigma(\Omega)$ satisfying:
\begin{equation}
\label{eq:Stokesest}
\sup_{t>0}\bigl(\|e^{-tS_q}\|_{q\to q}+\|\sqrt{t}\nabla e^{-tS_q}\|_{q\to q}\bigr)<\infty.
\end{equation}
\end{thm}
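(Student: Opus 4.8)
This is the Geng--Shen theorem; the natural route is the real-variable (Calder\'on--Zygmund) strategy that Z.\,Shen developed for elliptic systems on nonsmooth domains, adapted to the Stokes resolvent. The foundation is the $L^2$ theory: on $\mathbb{L}^2_\sigma(\Omega)$ the operator $S_2$ is the operator associated by Lax--Milgram with the closed, symmetric, coercive form $(u,v)\mapsto\int_\Omega\nabla u:\nabla v$ on $\{u\in H^1_0(\Omega;\mathbb{R}^n):\mathrm{div}\,u=0\}$, so that $S_2$ is nonnegative self-adjoint and $-S_2$ generates a bounded analytic semigroup on $\mathbb{L}^2_\sigma(\Omega)$. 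Testing the resolvent problem $\lambda u-\Delta u+\nabla\pi=f$, $\mathrm{div}\,u=0$, $u|_{\partial\Omega}=0$ against $u$ and using $\mathrm{div}\,u=0$ to kill the pressure gives the uniform bound $|\lambda|\,\|u\|_2+|\lambda|^{1/2}\|\nabla u\|_2\lesssim\|f\|_2$ for $\lambda$ in a sector $\Sigma_\psi=\{\lambda\neq0:|\arg\lambda|<\psi\}$ with $\psi>\tfrac{\pi}{2}$.

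The crux is to promote these bounds to the resolvent estimates $\|(\lambda+S_q)^{-1}\|_{q\to q}\lesssim|\lambda|^{-1}$ and $\|\nabla(\lambda+S_q)^{-1}\|_{q\to q}\lesssim|\lambda|^{-1/2}$, uniformly for $\lambda\in\Sigma_\psi$ and every $q\in(1,\infty)$. I would localize the resolvent equation on interior balls and, after flattening the boundary, on boundary balls. A Caccioppoli inequality bounds the $L^2$ average of $\nabla u$ on a ball by lower-order averages on a larger ball, and combined with the local $L^2$ solvability of the Stokes system this yields a weak reverse H\"older inequality $\big(\tfrac{1}{|B|}\int_B|\nabla u|^{p_0}\big)^{1/p_0}\lesssim\big(\tfrac{1}{|2B|}\int_{2B}|\nabla u|^2\big)^{1/2}$ with some exponent $p_0>2$, valid for local solutions with $f=0$ on $2B$. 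The pressure has to be transported through these estimates --- it is eliminated by testing against divergence-free fields and controlled locally in a negative-order norm --- and this is precisely where the constraint $\mathrm{div}\,u=0$ and the boundary condition $\nu\cdot u=0$ are used.

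The role of the $\mathscr{C}^1$ hypothesis is decisive at this point. At small scales a $\mathscr{C}^1$ boundary is uniformly close to a hyperplane and the outward normal is continuous; freezing the geometry produces a constant-coefficient half-space Stokes problem whose $W^{1,r}$ solvability is available for every finite $r$. A compactness/blow-up argument then upgrades the reverse H\"older exponent $p_0$ to an arbitrary finite exponent with constants that do not degenerate --- this is exactly the gain over the merely Lipschitz case, where only an interval of exponents around $2$ survives. Inserting these self-improving local estimates into Shen's real-variable criterion (a stopping-time decomposition built on the $L^2$ bound together with the local reverse H\"older inequality) produces the gradient resolvent estimate for $2\le q<\infty$; the range $1<q<2$ follows by duality, the adjoint problem being again a Dirichlet--Stokes resolvent.

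Finally, the resolvent bounds on $\Sigma_\psi$ with $\psi>\tfrac{\pi}{2}$ give, by the standard sectoriality criterion, analyticity and uniform boundedness of $(e^{-tS_q})_{t\ge0}$, and the estimate \eqref{eq:Stokesest} follows by writing $\sqrt{t}\,\nabla e^{-tS_q}=\tfrac{\sqrt t}{2\pi i}\int_\Gamma e^{t\lambda}\,\nabla(\lambda+S_q)^{-1}\,d\lambda$ over a suitable sectorial contour $\Gamma$ and combining $\|\nabla(\lambda+S_q)^{-1}\|_{q\to q}\lesssim|\lambda|^{-1/2}$ with the exponential decay of $e^{t\lambda}$ on $\Gamma$ and the scaling $|\lambda|\sim t^{-1}$. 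The main obstacle is exactly the uniform $W^{1,q}$ estimate up to the boundary for the full range $(1,\infty)$: a $\mathscr{C}^1$ boundary carries no quantitative modulus of continuity, so the reverse H\"older exponent cannot be improved by a direct perturbation and must be obtained by compactness, all while keeping track of the pressure, which is determined only up to constants and is coupled to $u$ through the incompressibility constraint.
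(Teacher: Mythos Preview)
The paper does not prove this statement at all: it is quoted verbatim from \cite{GS25} (Theorem~1.1 and Corollary~1.3) and used as a black box, with no argument given. So there is nothing to compare against on the paper's side.

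That said, your sketch is a faithful outline of the actual Geng--Shen argument in \cite{GS25}: the $L^2$ baseline from the form method, Caccioppoli and weak reverse H\"older inequalities for local solutions of the Stokes resolvent system, the use of the $\mathscr{C}^1$ hypothesis together with a compactness/small-perturbation argument to push the reverse H\"older exponent to any finite value, Shen's real-variable lemma to pass from the local reverse H\"older estimate to the global $L^q$ resolvent bound for $q>2$, duality for $q<2$, and finally the contour-integral representation to convert resolvent bounds into the semigroup estimate \eqref{eq:Stokesest}. As a high-level description this is correct; the genuinely delicate parts you flag (pressure control in the localized estimates, and the compactness argument replacing a quantitative modulus of continuity) are indeed where the work lies in \cite{GS25}, and your outline does not resolve them but correctly identifies them.
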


As a corollary, we have the following result.

\begin{corollary}
\label{cor:StokesImprovedRegularity}
Let $1<p<\infty$, $0\le \alpha\le \min\{1,\frac{n}{p}\}$ and define $q\in (1,\infty)$ by
$\frac{1}{q}=\frac{1}{p}-\frac{\alpha}{n}$. The Dirichlet-Stokes semigroup 
$(e^{-tS_p})_{t\ge0}$ on ${\mathbb{L}}_\sigma^p(\Omega)$
satisfies the following estimate
\[
\sup_{t>0}\Bigl(\bigl\|t^{\frac{\alpha}{2}}e^{-tS_p}\bigr\|_{p\to q} +
\bigl\|t^{\frac{1+\alpha}{2}}\nabla e^{-tS_p}\bigr\|_{p\to q} \Bigr)<\infty.
\]
\end{corollary}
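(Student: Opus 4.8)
The corollary asserts a Sobolev-type smoothing gain: the Dirichlet–Stokes semigroup maps $\mathbb{L}^p_\sigma(\Omega)$ into $\mathbb{L}^q_\sigma(\Omega)$ with $q\ge p$, at the cost of a power of $t$ dictated exactly by the scaling $\frac1q=\frac1p-\frac\alpha n$. The plan is to deduce this from the two on-diagonal bounds in the preceding theorem — the uniform boundedness $\|e^{-tS_r}\|_{r\to r}\le C$ and the gradient bound $\|\nabla e^{-tS_r}\|_{r\to r}\le C\,t^{-1/2}$, valid for every $r\in(1,\infty)$ — by interpolating them through the Gagliardo–Nirenberg inequality. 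The structural fact I would exploit is that for $t>0$ the orbit $e^{-tS_p}f$ lies in the domain of $S_p$ and hence, carrying the homogeneous Dirichlet condition $e^{-tS_p}f=0$ on $\partial\Omega$, belongs to $W^{1,p}_0(\Omega)^n$; this is what makes the scale-invariant Gagliardo–Nirenberg inequality available without boundary correction terms.

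The case $\alpha=0$ gives $q=p$ and is precisely the theorem, so I assume $0<\alpha\le\min\{1,\frac np\}$. For the first term I would record the Gagliardo–Nirenberg inequality on $W^{1,p}_0(\Omega)^n$,
\[
\|w\|_q\le C\,\|\nabla w\|_p^{\,\alpha}\,\|w\|_p^{\,1-\alpha},\qquad \tfrac1q=\tfrac1p-\tfrac\alpha n,\ 0<\alpha\le1,
\]
which follows by extending $w$ by zero to $\mathds{R}^n$ and applying the whole-space inequality. Taking $w=e^{-tS_p}f$ and inserting $\|e^{-tS_p}f\|_p\le C\|f\|_p$ and $\|\nabla e^{-tS_p}f\|_p\le C\,t^{-1/2}\|f\|_p$ yields $\|e^{-tS_p}f\|_q\le C\,t^{-\alpha/2}\|f\|_p$, which is the asserted bound on $\|t^{\alpha/2}e^{-tS_p}\|_{p\to q}$.

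For the gradient term I would use the semigroup property together with the consistency of the family $(S_r)_r$ along the $L^r$ scale. Writing $e^{-tS_p}=e^{-\frac t2 S_q}\,e^{-\frac t2 S_p}$ — legitimate because $g:=e^{-\frac t2 S_p}f\in\mathbb{L}^q_\sigma\subset\mathbb{L}^p_\sigma$ by the first step and the two realizations agree on the overlap — the already-proven bound gives $\|g\|_q\le C\,t^{-\alpha/2}\|f\|_p$, and then the $L^q$ gradient estimate of the theorem gives
\[
\|\nabla e^{-tS_p}f\|_q=\|\nabla e^{-\frac t2 S_q}g\|_q\le C\,t^{-1/2}\|g\|_q\le C\,t^{-(1+\alpha)/2}\|f\|_p,
\]
which is the bound on $\|t^{(1+\alpha)/2}\nabla e^{-tS_p}\|_{p\to q}$.

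The scaling bookkeeping is routine; the two points that require care — and which I regard as the only genuine obstacles — are, first, the justification that $e^{-tS_p}f\in W^{1,p}_0(\Omega)^n$, i.e. that the homogeneous Dirichlet trace encoded in $D(S_p)$ really places the orbit in $W^{1,p}_0$, which holds on $\mathscr{C}^1$ (indeed Lipschitz) domains where vanishing trace characterizes $W^{1,p}_0$; and, second, the consistency $e^{-tS_q}=e^{-tS_p}$ on $\mathbb{L}^q_\sigma\cap\mathbb{L}^p_\sigma=\mathbb{L}^q_\sigma$, which I would take from the common construction of the operators $S_r$ in \cite{GS25} (their resolvents coincide on the overlap, hence so do the semigroups). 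I would also note that the hypothesis $q\in(1,\infty)$ tacitly excludes the endpoint $\alpha=\frac np$ (when $\le 1$) that would force $q=\infty$, so the subcritical Gagliardo–Nirenberg inequality above always applies.
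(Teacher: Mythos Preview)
Your proposal is correct and follows essentially the same route as the paper: the paper obtains the $p\to q$ bound by interpolating the $L^p\!\to\! L^p$ and $L^p\!\to\! W^{1,p}$ estimates into $L^p\!\to\! W^{\alpha,p}$ and then invoking $W^{\alpha,p}\hookrightarrow L^q$, which is exactly your Gagliardo--Nirenberg step in slightly different packaging; the gradient bound via the splitting $e^{-tS_p}=e^{-\frac t2 S_q}e^{-\frac t2 S_p}$ is identical. Your added remarks on $W^{1,p}_0$-membership, semigroup consistency across the $L^r$ scale, and the tacit exclusion of the endpoint $\alpha=n/p$ are points the paper leaves implicit.
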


\begin{proof}
Note that by the choice of $p,q,\alpha$, we have that $W^{\alpha,p}\hookrightarrow L^q$ in dimension $n$.
Interpolating between the estimates $\|e^{-tS_p}\|_{p\to p}\le C$ and 
$\|\sqrt{t}\nabla e^{-tS_p}\|_{p\to p}\le \tilde{C}$ for all $t>0$ from \eqref{eq:Stokesest}, we have that 
\[
\|e^{-tS_p}\|_{p\to q}\lesssim \|e^{-tS_p}\|_{L^p\to W^{\alpha,p}} 
\lesssim \bigl\|e^{-tS_p}\bigr\|_{p\to p}^{1-\alpha}\bigl\|\nabla e^{-tS_p}\bigr\|_{p\to p}^\alpha 
\le C^{1-\alpha}\bigl(\tfrac{\tilde{C}}{\sqrt{t}}\bigr)^\alpha \lesssim t^{-\frac{\alpha}{2}},
\] 
which gives the first part of the announced estimate. For the second part, it suffices to remark
that for all $t>0$, $e^{-tS_p}=e^{-\frac{t}{2}S_p}e^{-\frac{t}{2}S_p}$ and use the estimate above and
\eqref{eq:Stokesest} as follows
\[
\|\nabla e^{-tS_p}\|_{p\to q}\le \|\nabla e^{-\frac{t}{2}S_q}\|_{q\to q}\|e^{-\frac{t}{2}S_p}\|_{p\to q}
\lesssim \tfrac{1}{\sqrt{t}}\, t^{-\frac{\alpha}{2}} =t^{-\frac{1+\alpha}{2}},
\]
which gives the second part of the estimate of Corollary~\ref{cor:StokesImprovedRegularity}.
\end{proof}

\subsection{Differential forms and Hodge-Laplacians in ${\mathscr{C}}^1$ bounded domains}

A differential form on $\Omega$ is a function from $\Omega$ with values in the exterior algebra
$\Lambda=\Lambda^0\oplus \Lambda ^1\oplus ... \oplus \Lambda^n$ of ${\mathbb{R}}^n$. The 
space of $\ell$-vectors $\Lambda^\ell$ ($1\le \ell\le n$) is the span of $\bigl\{e_J, J\subset\{1,2,...,n\}, |J|=\ell\bigr\}$ where
\[
e_J=e_{j_1}\wedge e_{j_2}\wedge ...\wedge e_{j_\ell}\mbox{ for } J=\{e_{j_1},e_{j_2},..., e_{j_\ell}\}
\mbox{ with }1\le j_1<j_2<...<j_\ell\le n.
\]
The $0$-vectors consist in scalars. That way, a differential form $u:\Omega\to \Lambda$ can be represented by
\[
u=u_0+\sum_{\underset{1\le j_1<j_2<...<j_\ell\le n}{\ell=1}}^nu_{j_1,...,j_\ell}\,e_{j_1}\wedge e_{j_2}\wedge...\wedge e_{j_\ell},
\]
where $u_0$ and $u_{j_1,...,j_\ell}$ for any $1\le \ell\le n$ and any $1\le j_1<j_2<...<j_\ell\le n$ map $\Omega$ to 
${\mathds{R}}$.
Here, $\wedge$ is the exterior product in the exterior algebra $\Lambda$, 
$\lrcorner\,$ is the interior product (or contraction). We denote by $d=\nabla\wedge =\sum_{i=1}^n\partial_i e_i \wedge$ 
the exterior derivative (it satisfies $d^2=0$) and $\delta=-\nabla \lrcorner\,=-\sum_{i=1}^n\partial_i e_i\lrcorner\,$
represents the interior derivative (or co-derivative) acting on differential forms from $\Omega$ to 
the exterior algebra $\Lambda$ ($\delta^2=0$ as well). In dimension $3$, we have these 
correspondences:
\begin{align}
	d :\ & \Lambda^0\sim{\mathds{R}} \xrightarrow[]{\nabla} \Lambda^1\sim{\mathds{R}}^3 \xrightarrow[]{{\rm curl}} 
	\Lambda^2\sim{\mathds{R}}^3 \xrightarrow[]{{\rm div}} \Lambda^3\sim {\mathds{R}}\\
	& \Lambda^0\sim{\mathds{R}} \xleftarrow[-{\rm div}]{} \Lambda^1\sim{\mathds{R}}^3 \xleftarrow[{\rm curl}]{} 
	\Lambda^2\sim{\mathds{R}}^3 \xleftarrow[-\nabla]{} \Lambda^3\sim {\mathds{R}}\ : \ \delta
\end{align}
where it is easy to verify that $d^2=0$ and $\delta^2=0$ using the well-known properties ${\rm curl}\,\nabla=0$
and ${\rm div}\,{\rm curl}=0$. For more on differential forms, we refer to 
\cite{McIM18} \S2.3 or \cite{AMcI04} and the references therein.

We start with establishing Hodge decompositions
in $L^p(\Omega,\Lambda)$ when $\Omega$ is a bounded ${\mathscr{C}}^1$ domain. It has been 
proved in \cite{McIM18}, Section~7 (Theorem~7.1), that the Hodge decompositions
\[
L^p(\Omega,\Lambda)={\sf N}_p(\underline{\delta})\oplus{\sf R}_p(d)
= {\sf N}_p(\delta)\oplus{\sf R}_p(\underline{d})
\]
hold for $\Omega\subset{\mathds{R}}^n$ a bounded Lipschitz domain whenever $p\in(p_H,p^H)$
where
\[
(p^H)'=p_H<\tfrac{2n}{n+1}<\tfrac{2n}{n-1}<p^H.
\]
Here, the operators $\underline{\delta}$ and $\underline{d}$ in $L^p(\Omega,\Lambda)$ are defined as follows:
\begin{align*}
{\sf D}_p(\underline{\delta})&=\{u\in L^p(\Omega,\Lambda); \delta u\in L^p(\Omega,\Lambda)
\mbox{ and }\nu\lrcorner \,u=0 \mbox{ on } \partial\Omega\}, \quad \underline{\delta} u=\delta u ;\\
{\sf D}_p(\underline{d})&=\{u\in L^p(\Omega,\Lambda); d u\in L^p(\Omega,\Lambda)
\mbox{ and }\nu\wedge u=0 \mbox{ on } \partial\Omega\}, \quad \underline{d} u=d u
\end{align*}
and ${\sf N}_p(A)$, ${\sf R}_p(A)$ and ${\sf D}_p(A)$ denote respectively the kernel of the operator A in 
$L^p(\Omega,\Lambda)$, its range in $L^p$ and its domain in $L^p$. The next theorem claims 
that in the case of a ${\mathscr{C}}^1$ bounded domain, we can take $p_H=1$ and $p^H=\infty$.
This has been proved for 1-forms in \cite{FMM98}, Theorem~11.1 and Theorem~11.2. The case of 
$\ell$-forms $2\le \ell\le n-2$ (which is relevant only if $n\ge 4$) is not explicit in the literature as far as the author 
is aware, although all the ingredients of the proof already exist.

\begin{theorem}
\label{thm:hodge-dec}
Let $\Omega\subset{\mathds{R}}^n$ be a bounded ${\mathscr{C}}^1$ domain. Then for all
$p\in(1,\infty)$, the Hodge decompositions
\[
L^p(\Omega,\Lambda)={\sf N}_p(\underline{\delta})\oplus{\sf R}_p(d)
= {\sf N}_p(\delta)\oplus{\sf R}_p(\underline{d})
\]
hold with the boundedness of the accompanying projections.
\end{theorem}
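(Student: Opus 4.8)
The plan is to deduce the $\mathscr{C}^1$ case $(p_H=1,\,p^H=\infty)$ from the Lipschitz result already cited from \cite{McIM18}, together with the endpoint information supplied by \cite{FMM98} for $1$-forms. The key mechanism is that the two Hodge decompositions in Theorem~\ref{thm:hodge-dec} are equivalent, via Hodge duality, to the $L^p$-boundedness of the orthogonal (Hodge) projection onto the closed subspace ${\sf R}_p(d)$ (equivalently onto ${\sf N}_p(\underline\delta)$, and by the second decomposition onto ${\sf R}_p(\underline d)$). So the whole statement reduces to proving that a single family of natural projections extends boundedly to all of $L^p(\Omega,\Lambda)$ for $p\in(1,\infty)$, and these projections are built from the Hodge--Laplacian resolvent / the operators $d,\delta$ in the familiar way.

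First I would recall the abstract equivalence: on $L^2(\Omega,\Lambda)$ one has the orthogonal Hodge decomposition, and the projections are bounded by the spectral theorem; the claim for general $p$ is exactly that these $L^2$-projections are bounded on $L^p$. Here one uses that the Hodge projections are, up to the harmonic-field part, given by singular-integral-type operators $d\,\Delta_H^{-1}\delta$ and $\delta\,\Delta_H^{-1}d$ (with the appropriate absolute/relative boundary conditions encoded in $\underline d,\underline\delta$), where $\Delta_H$ is the Hodge--Laplacian. The second I would invoke is the stability-under-Hodge-duality: the Hodge star $\star$ intertwines $d$ with $\delta$ and exchanges the two boundary conditions $\nu\wedge u=0$ and $\nu\lrcorner\,u=0$, so boundedness of the projection onto ${\sf R}_p(d)$ on $\ell$-forms is equivalent to boundedness of the projection onto ${\sf R}_p(\underline d)$ on $(n-\ell)$-forms. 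This symmetry lets me treat only the ``half'' $0\le\ell\le n/2$ and get the rest for free, and it also reduces the second decomposition to the first.

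The heart of the argument is the inductive bootstrapping across form-degree. The degrees $\ell=0,1,n-1,n$ are governed by the scalar Neumann/Dirichlet Laplacian and by the $1$-form results of \cite{FMM98} (Theorem~11.1 and Theorem~11.2), which already give $p\in(1,\infty)$ in a $\mathscr{C}^1$ domain. For the intermediate degrees $2\le\ell\le n-2$ I would set up a descent: the identity $\Delta_H=d\delta+\delta d$ expresses the $\ell$-form projection through the operators $d$ and $\delta$ acting into degrees $\ell\pm1$, and the mapping properties of $d$ and $\delta$ together with the Hodge decomposition in the neighbouring degrees let me transfer the $L^p$-boundedness upward one degree at a time. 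Concretely, once ${\sf R}_p(d)$-boundedness is known in degree $\ell-1$, the factorisation of the degree-$\ell$ projection through $\delta\,d\,\Delta_H^{-1}$ (composed with the lower-degree projection) yields the degree-$\ell$ bound; the $\mathscr{C}^1$ regularity enters precisely to guarantee the $L^p$-estimate for the single derivative of the Hodge-Laplacian resolvent on the full range $(1,\infty)$, in the same way the gradient estimate enters for the Stokes resolvent. Alternatively, and perhaps more cleanly, I would observe that all of these operators fit the framework of \cite{MM09a,MM09b,McIM18}: the only thing the Lipschitz theory leaves open is the size of the interval $(p_H,p^H)$, and the $\mathscr{C}^1$ hypothesis removes the corner-induced restriction, so one may re-run the \cite{McIM18} Section~7 argument verbatim, replacing each use of the Lipschitz Hodge/Helmholtz boundedness interval by the full interval $(1,\infty)$ now available in $\mathscr{C}^1$ domains.

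The main obstacle I anticipate is precisely closing this last step rigorously for the intermediate degrees $2\le\ell\le n-2$, which is exactly the case the author flags as ``not explicit in the literature.'' The subtlety is not conceptual but bookkeeping: one must verify that every estimate in the Lipschitz proof that was restricted to $p\in(p_H,p^H)$ used \emph{only} the $1$-form Hodge/Helmholtz boundedness (now valid on all of $(1,\infty)$ by \cite{FMM98}) and the scalar Laplacian theory, with no hidden dependence on a second boundary regularity threshold. I would therefore spend the bulk of the proof tracking which auxiliary operators appear in degrees $2,\dots,n-2$ and confirming each is controlled by the endpoint-free $1$-form and scalar results, so that the induction on $\ell$ closes and the boundedness of all accompanying projections holds uniformly for $p\in(1,\infty)$.
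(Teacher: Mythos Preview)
Your proposal has a genuine gap and also misidentifies how the $\mathscr{C}^1$ hypothesis enters.

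The induction-on-form-degree you sketch is circular. You want to pass from degree $\ell-1$ to degree $\ell$ via a factorisation like $\delta\,d\,\Delta_H^{-1}$, and you say the $\mathscr{C}^1$ regularity ``enters precisely to guarantee the $L^p$-estimate for the single derivative of the Hodge--Laplacian resolvent on the full range $(1,\infty)$.'' But the $L^p$-boundedness of $d\,\Delta_H^{-1}\delta$ and $\delta\,\Delta_H^{-1}d$ on $\ell$-forms \emph{is} the Hodge decomposition you are trying to prove; there is no independent source for these bounds in intermediate degrees. Your ``alternative'' of re-running \cite{McIM18}~\S7 verbatim while substituting the \cite{FMM98} $1$-form interval has the same problem: the restriction to $(p_H,p^H)$ in \cite{McIM18} does not come from the Helmholtz projection on $1$-forms at all, so knowing that projection on the full $(1,\infty)$ does not unlock anything in the \S7 bootstrap.

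What the paper actually does is different and more direct. The engine of \cite{McIM18}~\S7 is the improved-Sobolev estimate
\[
\|u\|_{\frac{pn}{n-1}}\lesssim \|u\|_p+\|\delta u\|_p+\|du\|_p
\quad\text{for }u\in{\sf D}_p(\underline{\delta})\cap{\sf D}_p(d),
\]
and the task is to prove this for \emph{every} $p\in(1,\infty)$ in a $\mathscr{C}^1$ domain. The paper does this with no induction on $\ell$: it uses the $\mathscr{C}^1$ trace regularity from \cite{MMT01} (Theorem~11.2 and the remark at the end of \S11) to show $\|{\rm Tr}_{|_{\partial\Omega}}u\|_{L^p(\partial\Omega)}\lesssim \|u\|_p+\|du\|_p+\|\delta u\|_p$, then works component-by-component on the scalar functions $u_{j_1,\dots,j_\ell}$, writing each as $v+w$ where $v$ solves the harmonic Dirichlet problem with boundary datum ${\rm Tr}_{|_{\partial\Omega}}u$ (controlled in $L^{np/(n-1)}$ via the nontangential maximal function and \cite{JK95}, \cite{D03}/\cite{DM02}) and $w\in W^{1,p}_0$ solves $\Delta w=\Delta u\in W^{-1,p}$ (again \cite{JK95}). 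The $\mathscr{C}^1$ hypothesis is used only through \cite{MMT01} and the full-range scalar Dirichlet theory, not through \cite{FMM98}. Once the estimate holds for all $p$, the \cite{McIM18} bootstrap improves $p^H$ by the factor $n/(n-1)$ indefinitely, and duality covers $p\le 2$.
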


\begin{proof}
The strategy in proving Theorem~7.1 in \cite{McIM18} (the analog of Theorem~\ref{thm:hodge-dec} 
in the case of bounded Lipschitz domains with $p$ in an interval around $2$) relies on the estimate
\begin{equation}
\label{eq:improved-p}
\|u\|_{\frac{pn}{n-1}}\lesssim \|u\|_p+\|\delta u\|_p+\|du\|_p
\end{equation}
for $u\in {\sf D}_p(\underline{\delta})\cap {\sf D}_p(d)$ or $u\in {\sf D}_p(\underline{d})\cap {\sf D}_p(\delta)$.
Here and after, $\|\cdot\|_r$ for $r\in[1,\infty]$ denotes the $L^r$ norm of differential forms defined in $\Omega$ 
with values in $\Lambda$. Our focus now is to prove the estimate \eqref{eq:improved-p} for all $1<p<\infty$ 
if $\Omega$ is a bounded ${\mathscr{C}}^1$ domain,
and then apply the same procedure as in \cite{McIM18} \S7. We focus on the first Hodge decomposition
$L^p(\Omega,\Lambda)={\sf N}_p(\underline{\delta})\oplus{\sf R}_p(d)$, keeping in mind that the proof of
the decomposition $L^p(\Omega,\Lambda)= {\sf N}_p(\delta)\oplus{\sf R}_p(\underline{d})$ follows the same lines. 

Assume that $p\in (1,\infty)$ and $u\in {\sf D}_p(\underline{\delta})\cap {\sf D}_p(d)$. 
That means that $u\in L^p(\Omega,\Lambda)$,
$du\in L^p(\Omega,\Lambda)$, $\delta u\in L^p(\Omega,\Lambda)$ and $\nu\lrcorner\, u=0$ on 
$\partial\Omega$, where $\nu\lrcorner\,u$ is well defined in $B^{-1/p}_p(\partial\Omega,\Lambda)$
since $u,\delta u\in L^p(\Omega,\Lambda)$ as explained in \cite{MMT01}, (11.9) (and (11.10) for 
$\nu\wedge u$ if $u, du\in L^p(\Omega,\Lambda)$). By Theorem~11.2 of \cite{MMT01} and the
remark at the end of \S11, this implies that for $u\in {\sf D}_p(\underline{\delta})\cap {\sf D}_p(d)$,
the quantity $\nu\wedge u$ belongs to  $L^p(\partial\Omega,\Lambda)$. By the closed graph
theorem, we infer that 
\[
\|\nu\wedge u\|_{L^p(\partial\Omega)}\lesssim \|u\|_p+\|du\|_p+\|\delta u\|_p,
\]
where the underlying constant depends on $p$ and $\Omega$. Taking the formula 
$u=\nu\lrcorner\,(\nu\wedge u)+\nu\wedge(\nu\lrcorner\, u)$ on $\partial\Omega$ 
(since the euclidian norm of $\nu(x)$ is equal to $1$ for all $x\in\partial\Omega$) into account,
we obtain that 
\begin{equation}
\label{eq:est-trace}
\|{\rm Tr}_{|_{\partial\Omega}}u\|_{L^p(\partial\Omega)}\lesssim \|u\|_p+\|\delta u\|_p+\|du\|_p
\end{equation}
for all $u\in {\sf D}_p(\underline{\delta})\cap {\sf D}_p(d)$. Since $-\Delta u = d\delta u+\delta d u$, 
we also have that
\begin{equation}
\label{eq:Delta(u)}
\|\Delta u\|_{W^{-1,p}(\Omega,\Lambda)}\le \|du\|_p+\|\delta u\|_p
\end{equation}
From now on, we will apply the following
reasoning to every component of $u\in {\sf D}_p(\underline{\delta})\cap {\sf D}_p(d)$ one by one:
\[
u=u_0+\sum_{\underset{1\le j_1<j_2<...<j_\ell\le n}{\ell=1}}^nu_{j_1,...,j_k}\,e_{j_1}\wedge e_{j_2}\wedge...\wedge e_{j_\ell},
\] 
where $u_0, u_{j_1,...,j_\ell}\in L^p(\Omega)$ with $\Delta u_0, \Delta u_{j_1,...,j_\ell}\in W^{-1,p}(\Omega)$ and
${\rm Tr}_{|_{\partial\Omega}}u_0, {\rm Tr}_{|_{\partial\Omega}}u_{j_1,...,j_\ell} \in L^p(\partial\Omega)$.
To make the next lines easier to read, we will still denote each component by $u$.

By \cite{JK95}, Theorem~5.3, there exists a (unique) harmonic function $v$ in $\Omega$ which 
converges nontangentially to ${\rm Tr}_{|_{\partial\Omega}}u$
almost everywhere in $\partial\Omega$ and satisfies ${\mathscr{N}}(v)\in L^p(\partial\Omega)$,
where ${\mathscr{N}}$ denotes the nontangential maximal function defined by
\[
{\mathscr{N}}(v)(x)=\sup_{y\in\gamma(x)}|v(y)|, \quad x\in\partial\Omega,
\]
$\gamma(x)$ is a nontangential cone with vertex at $x\in\partial\Omega$:
$\gamma(x)=\{y\in\Omega ; |y-x|\le C{\rm dist}(y,\partial\Omega)\}$ for a suitable
constant $C>1$. Now, by Lemma~4.1 in \cite{D03} or Lemma~6.1 in \cite{DM02},
this implies that $v\in L^{\frac{np}{n-1}}(\Omega)$ and
\begin{equation}
\label{eq:est-v}
\|v\|_{\frac{np}{n-1}}\lesssim \|{\mathscr{N}}(v)\|_{L^p(\partial\Omega)}\lesssim\|{\rm Tr}_{|_{\partial\Omega}}u\|_{L^p(\partial\Omega)}.
\end{equation}
Denote now by $w\in W^{1,p}_0(\Omega)$ the unique solution, according to \cite{JK95} Theorem~1.1, of the problem 
$\Delta w=\Delta u \in W^{-1,p}(\Omega)$ with the obvious accompanying estimate. 
Since $W^{1,p}_0(\Omega)\hookrightarrow L^{\frac{np}{n-1}}(\Omega)$ for a bounded domain $\Omega\subset{\mathds{R}}^n$,
we obtain
\begin{equation}
\label{eq:est-w}
\|w\|_{\frac{np}{n-1}}\lesssim \|u\|_{W^{-1,p}(\Omega)}.
\end{equation}
It is now clear that $u=v+w$ and therefore by \eqref{eq:est-v} and \eqref{eq:est-w}, together with \eqref{eq:est-trace} and
\eqref{eq:Delta(u)}, we obtain
\[
\|u\|_{\frac{np}{n-1}}\lesssim \|u\|_p+\|\delta u\|_p+\|du\|_p
\] 
which is the expected estimate \eqref{eq:improved-p}. Next, we proceed as in the proof of Theorem~7.1 in \cite{McIM18}:
we improve the Hodge exponent $p^H = 2+\varepsilon$ by a factor $\frac{n}{n-1}>1$, and this can be done as many
times as wanted. It means that the Hodge decompositions in Theorem~\ref{thm:hodge-dec} hold for any $p\ge 2$ and
by duality, also for all $p\in(1,2]$.
\end{proof}

\begin{corollary}
\label{cor:hodge-dirac-laplacian}
Following the notations of \cite{McIM18} \S5, 
there are two Hodge-Dirac operators in $L^p(\Omega,\Lambda)$: $D_{\|}=d+\underline{\delta}$ with domain 
${\sf D}_p(d)\cap {\sf D}_p(\underline{\delta})$ and $D_{\bot}=\underline{d}+\delta$ with domain
${\sf D}_p(\underline{d})\cap {\sf D}_p(\delta)$. The operators $D_\|$ and $D_\bot$ admit a bounded $S_\mu^o$ 
holomorphic functional calculus in $L^p(\Omega,\Lambda)$ for all $1<p<\infty$ and all $\mu\in(0,\frac{\pi}{2})$.  

Moreover, the Hodge Laplacian $-\Delta_\|:= D_\|^2$ admits a bounded 
$S_{\mu+}^o$ holomorphic functional calculus in $L^p(\Omega,\Lambda)$ and in ${\sf R}_p(\underline{\delta})$
for all $1<p<\infty$ and all $\mu\in(0,\frac{\pi}{2})$. With the same arguments, $-\Delta_\bot:=D_\bot^2$
admits a bounded $S_{\mu+}^o$ holomorphic functional calculus in $L^p(\Omega,\Lambda)$ and in 
${\sf R}_p(\underline{d})$ for all $1<p<\infty$ and all $\mu\in(0,\frac{\pi}{2})$.
\end{corollary}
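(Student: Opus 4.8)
The plan is to transport the framework of \cite{McIM18} \S5 to the present setting and to observe that the sole obstruction to the range of $p$ there was the Hodge decomposition, which Theorem~\ref{thm:hodge-dec} now supplies on all of $(1,\infty)$ for a bounded ${\mathscr{C}}^1$ domain. First I would record the algebraic structure: since $d^2=0$ and $\delta^2=0$, the Hodge--Dirac operators square to the Hodge Laplacians, $D_\|^2=(d+\underline\delta)^2=d\,\underline\delta+\underline\delta\,d=-\Delta_\|$ and likewise $D_\bot^2=-\Delta_\bot$, the cross terms vanishing by nilpotency. Refining the decomposition of Theorem~\ref{thm:hodge-dec} in the manner of \cite{McIM18} \S5 gives, with bounded projections,
\[
L^p(\Omega,\Lambda)={\sf N}_p(D_\|)\oplus{\sf R}_p(d)\oplus{\sf R}_p(\underline\delta),
\]
and the analogous splitting for $D_\bot$, where ${\sf N}_p(D_\|)={\sf N}_p(d)\cap{\sf N}_p(\underline\delta)$ is the space of harmonic fields on which every function of $D_\|$ acts as the scalar $f(0)$, and $\overline{{\sf R}_p(D_\|)}={\sf R}_p(d)\oplus{\sf R}_p(\underline\delta)$ is the reduced part on which $D_\|$ is injective with dense range.

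The core step is the bounded bisectorial $S_\mu^o$ holomorphic functional calculus of $D_\|$ and $D_\bot$ in $L^p$, and here I would run the argument of \cite{McIM18} unchanged. One first checks that $D_\|$ is bisectorial in $L^p$, the resolvent bounds $\|\lambda(D_\|-\lambda)^{-1}\|_{p\to p}\le C$ off a bisector being established as in \cite{McIM18} from the bounded Hodge projections; one then establishes the quadratic estimates which, by the abstract McIntosh/AKM machinery, upgrade bisectoriality to a bounded $H^\infty(S_\mu^o)$ calculus. The quadratic estimates are precisely where the improved integrability \eqref{eq:improved-p} enters, through the off-diagonal (Gaffney-type) bounds for the resolvent family $\bigl(I+t^2D_\|^2\bigr)^{-1}$; since \eqref{eq:improved-p} is now available for every $p\in(1,\infty)$, the estimates close on the full range. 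I expect the main obstacle to be one of bookkeeping rather than of substance: one must verify that no step in \cite{McIM18}, and in particular no interpolation or duality argument, used the symmetry $p_H=(p^H)'$ of the Hodge interval rather than merely the inclusion $(p_H,p^H)\subseteq(1,\infty)$. The lower half $p<2$ is then reached from the upper half by the duality between the $L^p$ and $L^{p'}$ realisations of each operator ($D_\|$, respectively $D_\bot$, being its own $L^{p'}$-adjoint), exactly as in the last line of the proof of Theorem~\ref{thm:hodge-dec}.

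Finally, for the Hodge Laplacians I would pass from the bisector to the sector by squaring. Given $f\in H^\infty(S_{\mu+}^o)$, the function $g(z)=f(z^2)$ is bounded, holomorphic and even on the bisector $S_{\mu/2}^o$, so that $f(-\Delta_\|)=f(D_\|^2)=g(D_\|)$ inherits the bound $\|f(-\Delta_\|)\|_{p\to p}\lesssim\|g\|_\infty=\|f\|_\infty$ from the calculus of $D_\|$; since $\mu/2\in(0,\tfrac{\pi}{2})$ whenever $\mu\in(0,\tfrac{\pi}{2})$, this yields the bounded $S_{\mu+}^o$ calculus of $-\Delta_\|$ for every $\mu\in(0,\tfrac{\pi}{2})$, and identically for $-\Delta_\bot$. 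The calculus on the whole of $L^p(\Omega,\Lambda)$ is legitimate because the harmonic part ${\sf N}_p(D_\|)$ is a complemented (bounded-projection) reducing subspace on which $f(-\Delta_\|)=f(0)$, while the assertion on ${\sf R}_p(\underline\delta)$ follows because this is one of the complemented Hodge summands, left invariant by $-\Delta_\|=d\,\underline\delta+\underline\delta\,d$, so that the bounded calculus simply restricts to it (and symmetrically for $-\Delta_\bot$ on ${\sf R}_p(\underline d)$).
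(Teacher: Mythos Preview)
Your proposal is correct and follows essentially the same approach as the paper: both reduce everything to the machinery of \cite{McIM18}, with Theorem~\ref{thm:hodge-dec} supplying the Hodge decomposition on the full range $(1,\infty)$ as the only new input. The paper is even terser, simply invoking \cite{McIM18} Theorem~5.1 for the Dirac calculus and \cite{McIM18} Corollary~8.1 for the Laplacians, whereas you unpack the mechanism (bisectoriality, squaring to pass from $S_\mu^o$ to $S_{\mu+}^o$, invariance of the Hodge summands); one minor inaccuracy is that \eqref{eq:improved-p} is consumed in proving Theorem~\ref{thm:hodge-dec} itself rather than re-entering through off-diagonal bounds at the quadratic-estimate stage, so once the Hodge projections are bounded the cited results from \cite{McIM18} apply as black boxes.
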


\begin{proof}
The first part of this corollary is immediate applying \cite{McIM18}, Theorem~5.1, thanks to Theorem~\ref{thm:hodge-dec}.
The second part, concerning the Hodge Laplacians, follows from \cite{McIM18} Corollary~8.1.
\end{proof}

We finish this subsection by giving some extra properties of the Hodge Laplacian when restricted to 
the range of $\underline{d}$ or $\underline{\delta}$. We start by a lemma relying on potential
operators from \cite{CMcI10}, \cite{MMM08} and \cite{McIM18} and methods similar to \cite{MM09b} \S3.

\begin{proposition}
\label{prop:improvedMaxwell}
Let $\Omega\subset{\mathds{R}}^n$ be a bounded ${\mathscr{C}}^1$ domain. Let $1<p<\infty$,
$0\le \alpha<\min\{1,\tfrac{n}{p}\}$ and define $q\in(1,\infty)$ by $\tfrac{1}{q}=\tfrac{1}{p}-\tfrac{\alpha}{n}$.
Then the following two properties hold:
\begin{itemize}
\item[(i)]
if $w\in {\sf D}_p(\delta)\cap {\sf R}_p(\underline{d})$, then $w\in L^q(\Omega,\Lambda)$ and
$\|w\|_q\lesssim \|w\|_p^{1-\alpha}\|\delta w\|_p^\alpha$;
\item[(ii)]
if $w\in {\sf D}_p(d)\cap {\sf R}_p(\underline{\delta})$, then $w\in L^q(\Omega,\Lambda)$ and
$\|w\|_q\lesssim \|w\|_p^{1-\alpha}\|d w\|_p^\alpha$.
\end{itemize}
\end{proposition}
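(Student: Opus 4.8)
```latex
The plan is to prove both estimates by combining the Hodge decomposition from Theorem~\ref{thm:hodge-dec}, the functional calculus for the Hodge Laplacians from Corollary~\ref{cor:hodge-dirac-laplacian}, and the smoothing properties of the associated heat semigroups, in direct analogy with the Stokes estimate of Corollary~\ref{cor:StokesImprovedRegularity}. I focus on part~(i); part~(ii) follows by the obvious symmetry $d\leftrightarrow\delta$, $\underline{d}\leftrightarrow\underline{\delta}$, $D_\bot\leftrightarrow D_\|$. The key point is that on the subspace ${\sf R}_p(\underline{d})$ the Hodge Laplacian $-\Delta_\bot=D_\bot^2$ acts, by the decomposition $L^p={\sf N}_p(\delta)\oplus{\sf R}_p(\underline{d})$, essentially like $\delta\underline{d}$ restricted to a $\delta$-free sector, and it generates a bounded analytic semigroup $(e^{t\Delta_\bot})_{t\ge0}$ carrying the Gaffney–Riesz type bound
\[
\sup_{t>0}\bigl(\|e^{t\Delta_\bot}\|_{p\to p}+\|\sqrt{t}\,\delta e^{t\Delta_\bot}\|_{p\to p}+\|\sqrt{t}\,d\, e^{t\Delta_\bot}\|_{p\to p}\bigr)<\infty,
\]
which is precisely the content of the bounded holomorphic functional calculus asserted in Corollary~\ref{cor:hodge-dirac-laplacian}, together with the fact that $D_\bot=(-\Delta_\bot)^{1/2}$ on ${\sf R}_p(\underline{d})$ is an $R$-sectorial operator with bounded inverse there.

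First I would record the Sobolev embedding $W^{\alpha,p}(\Omega,\Lambda)\hookrightarrow L^q(\Omega,\Lambda)$ guaranteed by the relation $\tfrac1q=\tfrac1p-\tfrac\alpha n$ with $0\le\alpha<\min\{1,\tfrac np\}$, exactly as in the proof of Corollary~\ref{cor:StokesImprovedRegularity}. Next, for $w\in{\sf D}_p(\delta)\cap{\sf R}_p(\underline{d})$ I would use the spectral/functional-calculus representation to write $w=(-\Delta_\bot)^{-\alpha/2}(-\Delta_\bot)^{\alpha/2}w$ on the sector ${\sf R}_p(\underline{d})$, where $(-\Delta_\bot)^{\alpha/2}$ is bounded from $W^{\alpha,p}$-type domains and, crucially, where the fractional power is controlled by $\delta w$. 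The mechanism is that on ${\sf R}_p(\underline{d})$ one has $w=\underline{d}\,\psi$ for some potential $\psi$, so that $D_\bot w=\delta\underline{d}\,\psi=\delta w$ (the $d$-part of $D_\bot$ annihilates ${\sf R}_p(\underline{d})$ by $d^2=0$ on the potential and the decomposition). Hence $\|D_\bot w\|_p=\|\delta w\|_p$, and the interpolation inequality for fractional powers under a bounded holomorphic functional calculus yields
\[
\|(-\Delta_\bot)^{\alpha/2}w\|_p=\|D_\bot^{\alpha}w\|_p\lesssim \|w\|_p^{1-\alpha}\|D_\bot w\|_p^{\alpha}=\|w\|_p^{1-\alpha}\|\delta w\|_p^{\alpha}.
\]
Combining the embedding $W^{\alpha,p}\hookrightarrow L^q$ with the coincidence of the fractional-power domain and the Bessel-potential space (again from the bounded $H^\infty$-calculus of $-\Delta_\bot$ on ${\sf R}_p(\underline{d})$) then gives $\|w\|_q\lesssim\|(-\Delta_\bot)^{\alpha/2}w\|_p\lesssim\|w\|_p^{1-\alpha}\|\delta w\|_p^{\alpha}$, which is exactly assertion~(i).

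The main obstacle I expect is the identification of the domain of the fractional power $(-\Delta_\bot)^{\alpha/2}$ on the range ${\sf R}_p(\underline{d})$ with the correct $L^q$-mapping, i.e. justifying rigorously the two embeddings ${\sf D}_p((-\Delta_\bot)^{\alpha/2})\hookrightarrow W^{\alpha,p}$ and $\|D_\bot^\alpha w\|_p\lesssim\|w\|_p^{1-\alpha}\|\delta w\|_p^\alpha$ uniformly in the parameters. This is where the potential-operator constructions of \cite{CMcI10,MMM08,McIM18} and the Gaffney estimates from \cite{MM09b}~\S3 must be invoked: one needs that the potential $\psi$ with $w=\underline{d}\psi$ can be taken with $\|\psi\|_{W^{1,p}}\lesssim\|w\|_p$, and that the square-function/quadratic estimates underlying the bounded functional calculus transfer to the required Riesz-transform bound $\delta(-\Delta_\bot)^{-1/2}$ bounded on ${\sf R}_p(\underline{d})$. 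Because $\Omega$ is only ${\mathscr{C}}^1$, the fractional powers need not coincide with the full Sobolev scale globally, so the argument must stay inside the sector ${\sf R}_p(\underline{d})$ where the inverse of $D_\bot$ is bounded and the functional calculus is available for the whole range $1<p<\infty$; handling the borderline behaviour as $\alpha\uparrow\min\{1,\tfrac np\}$ (and the strict inequality there) is the delicate technical point that prevents a one-line reduction to Corollary~\ref{cor:StokesImprovedRegularity}.
```
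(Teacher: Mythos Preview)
Your approach has a genuine gap at the step you yourself flag as ``the main obstacle'': the embedding ${\sf D}_p\bigl((-\Delta_\bot)^{\alpha/2}\bigr)\hookrightarrow W^{\alpha,p}(\Omega,\Lambda)$ (or even directly into $L^q$) is \emph{not} a consequence of the bounded $H^\infty$ functional calculus of Corollary~\ref{cor:hodge-dirac-laplacian}. The functional calculus only identifies fractional domains with complex interpolation spaces $[L^p,{\sf D}_p(D_\bot)]_\alpha$, and on a merely ${\mathscr{C}}^1$ domain there is no reason for ${\sf D}_p(D_\bot)={\sf D}_p(\underline{d})\cap{\sf D}_p(\delta)$ to sit in $W^{1,p}$. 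Indeed \S2.4 of the paper recalls Costabel's counterexample showing $X_N,X_T\not\subset H^{1/2+\varepsilon}$ in general for ${\mathscr{C}}^1$ domains; the Gaffney estimate~\eqref{eq:improved-p} gives only $L^{pn/(n-1)}$, which is strictly weaker than the $L^{pn/(n-p)}$ you would need for $\alpha$ close to $1$. So your analogy with Corollary~\ref{cor:StokesImprovedRegularity} breaks down precisely because the Stokes semigroup comes with a \emph{full} gradient bound $\|\sqrt{t}\,\nabla e^{-tS_p}\|_{p\to p}<\infty$ from \cite{GS25}, whereas for the Hodge Laplacian one only has $\|\sqrt{t}\,d\,e^{t\Delta_\bot}\|$ and $\|\sqrt{t}\,\delta\,e^{t\Delta_\bot}\|$.

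The paper's proof bypasses fractional powers entirely. It uses the concrete potential-operator decomposition $w=\delta Qw+Q\delta w+Lw$ from \cite{CMcI10,MMM08,McIM18}, where $Q:L^r\to W^{1,r}$ gains a full derivative and $L:L^r\to{\mathscr{C}}^\infty(\overline{\Omega})$ is smoothing. The point is that the ``bad'' piece $\delta Qw$ lies in ${\sf N}_q(\delta)$ and is therefore annihilated by the Hodge projection ${\mathbb{P}}_{{\sf R}_q(\underline{d})}$ (this is where Theorem~\ref{thm:hodge-dec} for the target exponent $q$ enters). The remaining piece $Q\delta w+Lw$ is then estimated in $L^p$ by $\|w\|_p$ and in $W^{1,p}$ by $\|\delta w\|_p$ (using also the invertibility of $D_\bot$ on ${\sf R}_p(\underline{d})$ to control $\|Lw\|_{W^{1,p}}\lesssim\|w\|_p\lesssim\|\delta w\|_p$), and one interpolates in the ordinary Sobolev scale $W^{\alpha,p}\hookrightarrow L^q$. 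In short, the potential operators are not a technical patch on your scheme but the actual mechanism that produces the missing Sobolev regularity; you should make them the core of the argument rather than a postscript.
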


\begin{proof}
Obviously, the two assertions $(i)$ and $(ii)$ are similar and their proofs will follow the same lines. We will focus 
on $(i)$. Using the potential operators from \cite{MMM08} or \cite{CMcI10} with the improvement described
in \cite{McIM18} \S4, we have for $w\in {\sf D}_p(\delta)\cap {\sf R}_p(\underline{d})$:
$w=\delta Q w +Q\delta w +Lw$ where $Q$ maps $L^r(\Omega,\Lambda)$ to $W^{1,r}(\Omega,\Lambda)$
for all $1<r<\infty$, $Q\delta$ is bounded on $L^r(\Omega,\Lambda)$
and $L$ maps $L^r(\Omega,\Lambda)$ to ${\mathscr{C}}^\infty(\overline{\Omega},\Lambda)$. 
Assume for a moment that $w\in {\sf R}_p(\underline{d})\cap L^q(\Omega,\Lambda)={\sf R}_q(\underline{d})$
(see Corollary~4.2 (v) of \cite{McIM18}). We have that $w={\mathbb{P}}_{|_{{\sf R}_q(\underline{d})}}w$ where 
${\mathbb{P}}_{|_{{\sf R}_q(\underline{d})}}$ is the projection from $L^q(\Omega,\Lambda)$ to 
${\sf R}_q(\underline{d})$. By the (second) Hodge decomposition in Theorem~\ref{thm:hodge-dec}, we have
that ${\mathbb{P}}_{|_{{\sf R}_q(\underline{d})}}(\delta Q w)=0$ since $\delta Q w\in {\sf N}_q(\delta)$.
Therefore, $w={\mathbb{P}}_{|_{{\sf R}_q(\underline{d})}}\bigl(Q\delta w +Lw\bigr)$. Now, we have on the one
hand $Q\delta w+Lw\in L^p(\Omega,\Lambda)$ and $\|Q\delta w+Lw\|_p\lesssim \|w\|_p$ and on the other
hand $Q\delta w+Lw\in W^{1,p}(\Omega,\Lambda)$ and $\|Q\delta w+Lw\|_{W^{1,p}}\lesssim \|\delta w\|_p$.
This last estimate is clear from the properties of the operator $Q$. The part where we estimate $Lw$ 
in $L^p$ with $\|\delta w\|_p$ needs some explanation: by the properties of $L$ and the fact that $\Omega$
is bounded, we have directly that $\|Lw\|_{W^{1,p}}\lesssim \|w\|_p$. The fact that the restriction to 
${\sf R}_p(\underline{d})$ of the Hodge-Dirac operator $D_\bot$ (still denoted by $D_\bot$) 
is invertible on ${\sf R}_p(\underline{d})$ allows us to write that $w=D_\bot^{-1}D_\bot w=D_\bot^{-1}\delta w$
and therefore $\|w\|_p=\|D_\bot^{-1}\delta w\|_p\lesssim \|\delta w\|_p$.
Interpolating between these estimates, we obtain
\[
\|Q\delta w+Lw\|_q\lesssim \|Q\delta w+Lw\|_{W^{\alpha,p}}\lesssim \|Q\delta w+Lw\|_{W^{1,p}}^\alpha\|Q\delta w+Lw\|_p^{1-\alpha}
\lesssim \|\delta w\|_p^{\alpha}\|w\|_p^{1-\alpha}
\]
so that $Q\delta w+Lw \in L^q(\Omega,\Lambda)$. The fact that $w={\mathbb{P}}_{|_{{\sf R}_q(\underline{d})}}(w)$
proves $(i)$ for $w\in {\sf R}_q(\underline{d})$ and then, by density of ${\sf R}_q(\underline{d})$ in ${\sf R}_p(\underline{d})$ 
(see Corollary~4.2 (vi) of \cite{McIM18}), we obtain $(i)$ for all $w\in {\sf R}_p(\underline{d})$.
\end{proof}

We obtain then as a consequence the following properties, similar to those stated in \cite{MM09b} Theorem~3.1;
see also \cite{HMM11}, Theorem~1.1.

\begin{corollary}
\label{cor:HodgeLaplacianImprovedRegularity}
Let $\Omega\subset{\mathds{R}}^n$ be a bounded ${\mathscr{C}}^1$ domain. Let $1<p<\infty$,
$0\le \alpha<\min\{1,\tfrac{n}{p}\}$ and define $q\in(1,\infty)$ by $\tfrac{1}{q}=\tfrac{1}{p}-\tfrac{\alpha}{n}$.
Then the following two properties hold:
\begin{itemize}
\item[(i)]
for the Hodge-Laplacian $\Delta_\bot$ on ${\sf R}_p(\underline{d})$:
\[
\sup_{t>0}\Bigl(\bigl\|t^{\frac{\alpha}{2}}e^{t\Delta_\bot}\bigr\|_{{\sf R}_p(\underline{d})\to L^q}+
\bigl\|t^{\frac{1+\alpha}{2}}\delta \,e^{t\Delta_\bot}\bigr\|_{{\sf R}_p(\underline{d})\to L^q}\Bigr)<\infty ;
\]
\item[(ii)]
and for the Hodge-Laplacian $\Delta_{\|}$ on ${\sf R}_p(\underline{\delta})$:
\[
\sup_{t>0}\Bigl(\bigl\|t^{\frac{\alpha}{2}}e^{t\Delta_{\|}}\bigr\|_{{\sf R}_p(\underline{\delta})\to L^q}+
\bigl\|t^{\frac{1+\alpha}{2}}d \,e^{t\Delta_{\|}}\bigr\|_{{\sf R}_p(\underline{\delta})\to L^q}\Bigr)<\infty.
\]
\end{itemize}
\end{corollary}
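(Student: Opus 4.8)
The plan is to reduce everything to the Gagliardo--Nirenberg-type inequality of Proposition~\ref{prop:improvedMaxwell}, fed by the elementary semigroup bounds coming from the bounded holomorphic functional calculus of Corollary~\ref{cor:hodge-dirac-laplacian}; the argument then runs in close parallel to the proof of Corollary~\ref{cor:StokesImprovedRegularity}, with Proposition~\ref{prop:improvedMaxwell} playing the role that interpolation and Sobolev embedding played there. I would treat only $(i)$, since $(ii)$ is obtained by exchanging the roles of $d,\underline{d}$ and $\delta,\underline{\delta}$ and the two Hodge decompositions. First I would record two basic facts. Because $-\Delta_\bot=D_\bot^2$ admits a bounded $S_{\mu+}^o$ functional calculus on $L^r(\Omega,\Lambda)$ for every $r\in(1,\infty)$, the semigroup $(e^{t\Delta_\bot})_{t\ge 0}$ is uniformly bounded and analytic, and applying the calculus of $D_\bot$ to the function $z\mapsto z\,e^{-tz^2}$ (bounded holomorphic on a suitable bisector, with sup-norm $\lesssim t^{-1/2}$) yields
\begin{equation}
\label{eq:basehodge}
\|e^{t\Delta_\bot}\|_{r\to r}\lesssim 1\qquad\text{and}\qquad \|D_\bot e^{t\Delta_\bot}\|_{r\to r}\lesssim t^{-1/2},\quad t>0,
\end{equation}
for all $1<r<\infty$, the semigroups for different $r$ being consistent on overlaps. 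Next I would observe that $e^{t\Delta_\bot}$ leaves ${\sf R}_r(\underline{d})$ invariant and that on this subspace $D_\bot$ reduces to $\delta$: indeed ${\sf R}_r(\underline{d})\subseteq {\sf N}_r(\underline{d})$ (as $\underline{d}$ is closed and $\underline{d}^2=0$), so $D_\bot w=\underline{d}w+\delta w=\delta w$ for $w\in {\sf R}_r(\underline{d})\cap{\sf D}_r(D_\bot)$. In particular \eqref{eq:basehodge} specialises to $\|\delta e^{t\Delta_\bot}\|_{{\sf R}_r(\underline{d})\to r}\lesssim t^{-1/2}$.

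For the first term in $(i)$, fix $f\in {\sf R}_p(\underline{d})$ and set $w=e^{t\Delta_\bot}f$. Then $w\in {\sf R}_p(\underline{d})$ and, since $w\in {\sf D}_p(\Delta_\bot)\subseteq {\sf D}_p(D_\bot)$, the identification above gives $w\in {\sf D}_p(\delta)\cap {\sf R}_p(\underline{d})$, so Proposition~\ref{prop:improvedMaxwell}$(i)$ applies and, combined with \eqref{eq:basehodge}, produces
\begin{equation}
\label{eq:firsthodge}
\|e^{t\Delta_\bot}f\|_q\lesssim \|w\|_p^{1-\alpha}\|\delta w\|_p^{\alpha}\lesssim \|f\|_p^{1-\alpha}\bigl(t^{-1/2}\|f\|_p\bigr)^{\alpha}=t^{-\alpha/2}\|f\|_p,
\end{equation}
which is the first bound (the case $\alpha=0$ being trivial). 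For the second term I would split the semigroup as $e^{t\Delta_\bot}=e^{\frac{t}{2}\Delta_\bot}e^{\frac{t}{2}\Delta_\bot}$. The first factor maps $f$ to $g:=e^{\frac{t}{2}\Delta_\bot}f\in {\sf R}_p(\underline{d})\cap L^q={\sf R}_q(\underline{d})$ with $\|g\|_q\lesssim t^{-\alpha/2}\|f\|_p$ by \eqref{eq:firsthodge}, while the second factor is estimated on $L^q$ through the specialisation of \eqref{eq:basehodge} at $r=q$, giving
\begin{equation}
\label{eq:secondhodge}
\|\delta e^{t\Delta_\bot}f\|_q=\bigl\|\delta e^{\frac{t}{2}\Delta_\bot}g\bigr\|_q\lesssim t^{-1/2}\|g\|_q\lesssim t^{-1/2}\,t^{-\alpha/2}\|f\|_p=t^{-\frac{1+\alpha}{2}}\|f\|_p.
\end{equation}
Taking the supremum over $t>0$ in \eqref{eq:firsthodge} and \eqref{eq:secondhodge} then yields $(i)$.

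The routine analytic-semigroup and interpolation content here is harmless; the points needing care are the structural ones. The main obstacle will be to justify cleanly that $e^{t\Delta_\bot}$ preserves ${\sf R}_p(\underline{d})$ and that $D_\bot$ collapses to $\delta$ on it, since this is exactly what converts the abstract bound $\|D_\bot e^{t\Delta_\bot}\|_{r\to r}\lesssim t^{-1/2}$ into the concrete $\delta$-estimate in the statement, and also what lets Proposition~\ref{prop:improvedMaxwell} be applied to $w=e^{t\Delta_\bot}f$; all of this rests on the invariance of the Hodge decomposition under the functional calculus of $D_\bot$ and on the compatibility ${\sf R}_p(\underline{d})\cap L^q={\sf R}_q(\underline{d})$ from Corollary~4.2 of \cite{McIM18}. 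The only remaining subtlety is the $L^p$/$L^q$ consistency of the semigroups, needed so that \eqref{eq:basehodge} may be invoked at the exponent $q$ in \eqref{eq:secondhodge}; this follows from uniqueness of the analytic extensions together with the density of ${\sf R}_q(\underline{d})$ in ${\sf R}_p(\underline{d})$.
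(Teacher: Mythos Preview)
Your proposal is correct and is precisely the argument the paper intends: the corollary is stated without proof as an immediate consequence of Proposition~\ref{prop:improvedMaxwell} combined with the functional-calculus bounds of Corollary~\ref{cor:hodge-dirac-laplacian}, in direct analogy with the proof of Corollary~\ref{cor:StokesImprovedRegularity}. Your write-up makes explicit exactly the steps (invariance of ${\sf R}_p(\underline{d})$ under $e^{t\Delta_\bot}$, reduction of $D_\bot$ to $\delta$ there, application of Proposition~\ref{prop:improvedMaxwell}$(i)$, and the semigroup splitting) that the paper leaves implicit.
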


\subsection{Formulation of \eqref{mhd} in dimension $n\ge 3$}
\label{subsec:diff-form}

In the language of differential forms, with $u=\sum_{i=1}^n u_i e_i$ a $1$-form, 
$b=\sum_{j,k=1}^nb_{jk}(e_j\wedge e_k)$ a $2$-form (since we don't impose $j<k$ in this expression,
it is implicit that $b_{kj}=-b_{jk}$ and that the coefficients are half the ones we would have with the condition
$j<k$) and $\pi$ a 0-form, \eqref{mhd} in dimension $n$ can be formulated as follows:
\begin{equation}
\label{mhd-forms}
\left\{
\begin{array}{rclcl}
\partial_t u-\Delta u+d \pi+(u\cdot\nabla)u&=&\delta\,b\lrcorner\, b
&\mbox{ in }&(0,T)\times\Omega\\
\partial_t b-\Delta b&=&d\,(u\lrcorner\, b)&\mbox{ in }&(0,T)\times\Omega\\
\delta\,u&=&0&\mbox{ in }&(0,T)\times\Omega\\
\end{array}
\right.
\end{equation}
with boundary conditions
\begin{equation}
\label{bc-forms}
\left\{
\begin{array}{rclcl}
u&=&0&\mbox{ on }&(0,T)\times\partial\Omega\\
\nu\wedge b&=&0&\mbox{ on }&(0,T)\times\partial\Omega\\
\nu\wedge \delta b&=&0&\mbox{ on }&(0,T)\times\partial\Omega
\end{array}
\right.
\end{equation}
For more on this, see \cite{D22} or \cite{M21}. 
A remark before going further. In the equations \eqref{mhd-forms}, we do not add the extra condition $d b=0$, 
which would correspond in dimension $3$ to the condition ${\rm div}\,b=0$. Indeed, the fact that $db$ vanishes 
is a consequence of the system \eqref{mhd-forms}-\eqref{bc-forms}.

\begin{lemma}
\label{lem:divb=0}
Let $b$ be a smooth enough $2$-form satisfying $\partial_t b-\Delta b= df$ (for a smooth enough $1$-form $f$)
in $(0,T)\times \Omega$ and $\nu\wedge b=0$, $\nu\wedge \delta b=0$ on $(0,T)\times\partial\Omega$. If $db_0=0$,
then $db=0$ in $(0,T)\times\Omega$.
\end{lemma}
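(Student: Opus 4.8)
The plan is to apply the co-derivative $d$ to the evolution equation for $b$ and derive a closed heat-type equation for the $3$-form $db$, then conclude that $db$ vanishes for all time by uniqueness, given that it vanishes initially. Set $\omega = db$. Since $d^2=0$, applying $d$ to $\partial_t b - \Delta b = df$ kills the right-hand side: indeed $d(df)=0$, and $d$ commutes with $\partial_t$ and with $\Delta = -(d\delta+\delta d)$ (because $d\Delta = d(-d\delta-\delta d) = -d\delta d = (-d\delta - \delta d)d = \Delta d$, using $d^2=0$ twice). Therefore $\omega$ satisfies
\begin{equation}
\label{eq:omega-eq}
\partial_t \omega - \Delta \omega = 0 \quad \text{in } (0,T)\times\Omega,
\end{equation}
with $\omega(0) = db_0 = 0$.

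Next I would identify the correct boundary conditions so that this heat equation falls under a realization of the Hodge Laplacian with a bounded functional calculus (hence a uniqueness property), as supplied by Corollary~\ref{cor:hodge-dirac-laplacian}. From $\nu\wedge b = 0$ on $\partial\Omega$ one gets $\nu\wedge db = 0$ on $\partial\Omega$ (the tangential trace condition is preserved under $d$, since $d$ differentiates only in directions tangent to the boundary when the full tangential part is fixed); thus $\omega \in {\sf D}(\underline{d})$, i.e.\ $\nu\wedge\omega=0$. For the second boundary condition, note that as a top-degree-adjacent form $\delta\omega = \delta d b$, and combining $-\Delta b = d\delta b + \delta d b$ with the assumed condition $\nu\wedge\delta b = 0$ should give the complementary normal condition on $\omega$ needed to place it in the domain of $\underline{\delta}$ as well. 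The upshot is that $\omega$ solves the heat semigroup equation generated by the Hodge Laplacian $-\Delta_\bot = D_\bot^2$ (or $-\Delta_\|$) restricted to the appropriate subspace, and since that operator generates an analytic semigroup with trivial solution for zero initial data, $\omega(t) = e^{t\Delta}\omega(0) = 0$ for all $t\in(0,T)$.

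The main obstacle will be the careful bookkeeping of boundary conditions: one must verify that the traces $\nu\wedge b=0$ and $\nu\wedge\delta b=0$ propagate to exactly the pair of boundary conditions that identify $\omega=db$ as lying in the domain of one of the self-adjoint Hodge-Laplacian realizations from Corollary~\ref{cor:hodge-dirac-laplacian}, rather than some intermediate operator without a uniqueness theory. In particular the interplay between $\nu\wedge\delta b$ and the normal trace $\nu\lrcorner\,\omega$ of $\omega$ has to be checked, since $\delta\omega = \delta d b = -\Delta b - d\delta b$ and one needs the relevant boundary term to vanish in the weak formulation so that the energy identity $\tfrac{1}{2}\tfrac{d}{dt}\|\omega\|_2^2 = -\|d\omega\|_2^2 - \|\delta\omega\|_2^2 \le 0$ holds without boundary contributions. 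Once the boundary conditions are correctly matched, uniqueness for the abstract Cauchy problem \eqref{eq:omega-eq} with $\omega(0)=0$ gives $db\equiv 0$ immediately; alternatively, the energy estimate together with $\|\omega(0)\|_2=0$ and Gronwall yields $\|\omega(t)\|_2=0$ for all $t$, which is the most transparent route and avoids invoking the full functional calculus.
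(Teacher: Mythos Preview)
Your strategy is the paper's: set $\omega=db$, derive $\partial_t\omega-\Delta\omega=0$ with $\omega(0)=0$, identify boundary conditions that place $\omega$ in the domain of a Hodge--Laplacian realization, and invoke semigroup uniqueness. The equation and the first boundary condition $\nu\wedge\omega=0$ are correctly obtained.

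The gap is the second boundary condition, and your guess points the wrong way. For $-\Delta_\bot=D_\bot^2$ with $D_\bot=\underline{d}+\delta$, membership in the domain requires $\nu\wedge\omega=0$ \emph{and} $\nu\wedge\delta\omega=0$ (the latter comes from $D_\bot\omega\in{\sf D}(\underline{d})$); the trace $\nu\lrcorner\,\omega$ and the domain of $\underline{\delta}$ are irrelevant here --- those belong to the $\Delta_\|$ realization, which does not match the given data on $b$. The paper obtains $\nu\wedge\delta\omega=0$ by writing $\delta\omega=\delta d b=-\Delta b - d\delta b$, noting that $\nu\wedge d\delta b=0$ (since $\nu\wedge\delta b=0$ and $d$ preserves the tangential condition), and then using the PDE to replace $-\nu\wedge\Delta b$ by $-\nu\wedge\partial_t b + \nu\wedge df$; the first term vanishes because $\nu\wedge b=0$ for all $t$, and in the intended application $f=u\lrcorner\,b$ vanishes on $\partial\Omega$ (since $u$ does), so $\nu\wedge df=0$ as well. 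Once both tangential conditions $\nu\wedge\omega=0$, $\nu\wedge\delta\omega=0$ are in hand, $\omega(t)=e^{t(\Delta_\bot)_{|\Lambda^3}}\omega(0)=0$ and you are done. Your energy-method alternative would also close, but only after this same pair of boundary conditions is established, since the integration by parts needed for $\tfrac{d}{dt}\|\omega\|_2^2\le 0$ produces exactly the boundary term $\int_{\partial\Omega}\langle\nu\wedge\delta\omega,\omega\rangle$ that must vanish.
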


\begin{proof}
Let $b$ be as in the statement of the lemma and let $\beta=db$. Then $\beta$ satisfies $\partial_t\beta-\Delta\beta=0$
in $(0,T)\times \Omega$. As for what happens on the boundary $\partial\Omega$, we have $\nu\wedge \beta =0$ since
$\nu\wedge b=0$ (it is easy to see, by integration by parts, that if $\nu\wedge w=0$, then $\nu\wedge dw=$ for 
$w$ smooth enough). Moreover, $\nu\wedge \delta b=0$ so 
that $\nu\wedge d\delta b=0$ and then 
\[
\nu\wedge \delta \beta =-\nu\wedge \Delta b -\nu\wedge d\delta b=-\nu\wedge \partial_t b=0
\]
since $\nu\wedge b=0$. Therefore, $\beta$ satisfies 
\[
\left\{
\begin{array}{rclcl}
\partial_t\beta-\Delta\beta&=&0&\mbox{ in }&(0,T)\times\Omega\\
\nu\wedge \beta&=&0&\mbox{ on }&(0,T)\times\partial\Omega\\
\nu\wedge \delta\beta&=&0&\mbox{ on }&(0,T)\times\partial\Omega\\
\beta(0)=d b_0 &=&0&\mbox{ in }&\Omega
\end{array}
\right.
\]
which implies that $\beta(t)=e^{t(\Delta_\bot)_{|_{\Lambda^3}}}db_0=0$ for all $t\in[0,T)$. Here, 
$(\Delta_\bot)_{|_{\Lambda^3}}$ is the operator $\Delta_\bot$ defined in Corollary~\ref{cor:hodge-dirac-laplacian}
restricted to differential forms taking values in $\Lambda^3$. 
\end{proof}

\begin{remark}
In the formulation of \eqref{mhd}, \eqref{bc}, \eqref{ic} in 3 dimensions, the equation ${\rm div}\,b=0$ is 
a consequence of the other equations and boundary conditions as soon as the initial data $b_0$ is
divergence-free as well, as shown by the previous lemma.
\end{remark}

\subsection{The magic formula}

In bounded ${\mathscr{C}}^1$ (or Lipschitz) domains, it is known (see \cite{MMT01} \S 11, 
or \cite{C90} in dimension $3$) that the spaces 
$X_N:=\{w\in L^2(\Omega,\Lambda):dw\in L^2(\Omega,\Lambda) \mbox{ and }\delta w\in L^2(\Omega,\Lambda)
\mbox{ with } \nu\wedge w=0 \mbox{ on }\partial\Omega\}$ and
$X_T:=\{w\in L^2(\Omega,\Lambda):dw\in L^2(\Omega,\Lambda) \mbox{ and }\delta w\in L^2(\Omega,\Lambda)
\mbox{ with } \nu\lrcorner \,w=0 \mbox{ on }\partial\Omega\}$
are contained in the Sobolev space $H^{\frac{1}{2}}(\Omega;\Lambda)$. For large classes of more regular 
domains, they are contained in $H^1(\Omega,\Lambda)$ (see for example \cite{ABDG98} in dimension $3$ for 
${\mathscr{C}}^{1,1}$ domains --Theorem~2.9 and Theorem~2.12-- or convex domains --Theorem~2.17-- or
\cite{S95} for smooth domains in all dimensions --Corollary~2.1.6 and Theorem~2.1.7).

This regularity of the spaces $X_T$ and $X_N$ cannot be extended to the case of ${\mathscr{C}}^1$ domains.
In \cite{C19}, M.\,Costabel proved that, in dimension $3$, there exists a ${\mathscr{C}}^1$ bounded domain 
for which $X_T$ and $X_N$ are contained in none of the spaces $H^{\frac{1}{2}+\varepsilon}(\Omega,\Lambda)$
(for all $\varepsilon >0$). 

In the problem that is under consideration here, the right hand-side of the second equation of \eqref{mhd-forms} has 
a term $d(u\lrcorner\, b)$ for $u$ a $1$-form and $b$ a $2$-form. An ideal result would be to estimate
this quantity, say in $L^r$, with the norms of $u$, $\delta u$ and $du$ in $L^p$ and $b$, $\delta b$ and $db$ in 
$L^q$ with $\frac{1}{r}=\frac{1}{p}+\frac{1}{q}$. This is not quite what the following formula will allow, but it 
will be sufficient for our case.

\begin{theorem}[The magic formula]
\label{thm:magic}
Let $u=\sum_{i=1}^nu_ie_i$ be a smooth 1-form and $b=\sum_{j,k=1}^n b_{jk}e_j\wedge e_k$ (where it is implicit
that $b_{kj}=-b_{jk}$) be a smooth 2-form defined on $\Omega\subset{\mathds{R}}^n$. Then the following formula
holds
\begin{equation}
\label{eq:magic}
d(u\lrcorner\, b) +\delta(u\wedge b) = \delta u \wedge b - u\wedge\delta b-u\lrcorner \,db +
(\nabla u+\nabla u^\top)\cdot(b-b^\top).
\end{equation}
\end{theorem}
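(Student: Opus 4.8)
The plan is to establish \eqref{eq:magic} as a pointwise identity between smooth $2$-forms, by expanding the two summands on the left via the product rules for $d$ and $\delta$ together with the fundamental anticommutation relation
\[
e_i\lrcorner(e_j\wedge\omega)+e_j\wedge(e_i\lrcorner\omega)=\delta_{ij}\,\omega,
\]
valid for every form $\omega$. Before starting I would fix the meaning of the last term: reading a matrix $A=(A_{ij})$ as the $2$-form $\sum_{i,j}A_{ij}\,e_i\wedge e_j$, the expression $(\nabla u+\nabla u^\top)\cdot(b-b^\top)$ denotes the $2$-form attached to the matrix product of the symmetric Jacobian $\nabla u+\nabla u^\top$ with the antisymmetric array $b-b^\top$; only the antisymmetric part of this product contributes, so the object is well defined.

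First I would expand $d(u\lrcorner b)$. Writing $d=\sum_\ell e_\ell\wedge\partial_\ell$ and using $\partial_\ell(u\lrcorner b)=(\partial_\ell u)\lrcorner b+u\lrcorner\partial_\ell b$, the contribution carrying a derivative of $b$ is $\sum_\ell e_\ell\wedge(u\lrcorner\partial_\ell b)$. The anticommutation relation then gives $\sum_\ell e_\ell\wedge(e_i\lrcorner\partial_\ell b)=\partial_i b-e_i\lrcorner db$, so this contribution equals $\nabla_u b-u\lrcorner db$, where $\nabla_u b:=\sum_m u_m\partial_m b$ is the componentwise directional derivative. Hence $d(u\lrcorner b)=\sum_{\ell,i}(\partial_\ell u_i)\,e_\ell\wedge(e_i\lrcorner b)+\nabla_u b-u\lrcorner db$.

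Next I would expand $\delta(u\wedge b)$ starting from $\delta=-\sum_m\partial_m\,(e_m\lrcorner\,\cdot)$ and the antiderivation identity $e_m\lrcorner(u\wedge b)=u_m\,b-u\wedge(e_m\lrcorner b)$. Differentiating, and using $\sum_m e_m\lrcorner\partial_m b=-\delta b$ together with $\sum_m\partial_m u_m=-\delta u$, yields $\delta(u\wedge b)=\delta u\wedge b-\nabla_u b+\sum_m(\partial_m u)\wedge(e_m\lrcorner b)-u\wedge\delta b$. Adding the two expansions, the transport terms $\pm\nabla_u b$ cancel and there remain the three ``clean'' form operations $\delta u\wedge b-u\wedge\delta b-u\lrcorner db$ plus the two gradient sums $\sum_{\ell,i}(\partial_\ell u_i)\,e_\ell\wedge(e_i\lrcorner b)+\sum_{m,i}(\partial_m u_i)\,e_i\wedge(e_m\lrcorner b)$. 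After relabelling indices these merge into $\sum_{a,m}(\partial_a u_m+\partial_m u_a)\,e_a\wedge(e_m\lrcorner b)$, and inserting $e_m\lrcorner b=\sum_k(b-b^\top)_{mk}e_k$ identifies the result with $(\nabla u+\nabla u^\top)\cdot(b-b^\top)$, which is exactly \eqref{eq:magic}.

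I expect the main obstacle to be the two gradient sums: individually neither is a recognizable form operation, and it is only after symmetrizing in the two free indices and using the antisymmetry of $b$ (through $e_m\lrcorner b=\sum_k(b-b^\top)_{mk}e_k$) that they assemble into the strain term $\nabla u+\nabla u^\top$. The cancellation of the directional derivative $\nabla_u b$ between the two expansions, and the reduction $\sum_\ell e_\ell\wedge(e_i\lrcorner\partial_\ell b)=\partial_i b-e_i\lrcorner db$ via the anticommutation relation, are the two places where care with signs and with the half-coefficient convention $b_{kj}=-b_{jk}$ is essential.
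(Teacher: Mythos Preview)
Your argument is correct. Both your proof and the paper's establish \eqref{eq:magic} by direct computation, but they organise the computation differently. The paper expands every term of $d(u\lrcorner b)$, $\delta(u\wedge b)$, $u\lrcorner db$, $u\wedge\delta b$ in explicit coordinates $\sum_{i,j,k}(\cdots)\,e_i\wedge e_j$ and then matches the residual sums by inspection. You instead invoke the structural identities of the exterior algebra --- the anticommutation $e_i\lrcorner(e_j\wedge\omega)+e_j\wedge(e_i\lrcorner\omega)=\delta_{ij}\omega$ and the antiderivation rule for $e_m\lrcorner$ --- to isolate, before any index expansion, the transport term $\nabla_u b$ (which cancels between the two halves) and the two ``gradient sums'' $\sum(\partial_\ell u_i)\,e_\ell\wedge(e_i\lrcorner b)$ and $\sum(\partial_m u_i)\,e_i\wedge(e_m\lrcorner b)$, whose symmetrisation produces the strain term. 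The paper's approach is entirely elementary and self-contained; yours explains \emph{why} the symmetric Jacobian appears and would adapt more readily to higher-degree $b$ or to a coordinate-free setting.
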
 

\begin{remark}
We also consider $b$ as an antisymmetric matrix $(b_{jk})_{1\le j,k\le n}$. Here, $\nabla u$ denotes
the matrix $(\partial_iu_j)_{1\le i,j\le n}$. In the last term in \eqref{eq:magic}, we identify the 2-form $b$ with the 
$n\times n$ antisymmetric matrix $(b_{jk})_{1\le j,k\le n}$ and $A\cdot B$ denotes the matrix product between the 
two $n\times n$ matrices $A$ and $B$.
\end{remark}

\begin{corollary}[Estimates for the induction equation]
\label{cor:magic-est}
For all bounded domain $\Omega\subset{\mathds{R}}^n$ of class ${\mathscr{C}}^1$, for all $p, \tilde{p},q,\tilde{q}\in(1,\infty)$ with
$\frac{1}{r}:=\frac{1}{\tilde{p}}+\frac{1}{q}=\frac{1}{p}+\frac{1}{\tilde{q}}<1$, there exists a constant $C>0$ such that for all
$u\in L^p(\Omega,\Lambda^1)$ with $\nabla u\in L^{\tilde{p}}(\Omega,{\mathds{R}}^{n\times n})$, ${\rm Tr}_{|_{\partial\Omega}}u=0$
and $b\in L^q(\Omega,\Lambda^2)$ with $db \in L^{\tilde{q}}(\Omega,\Lambda^3)$
and $\delta b\in L^{\tilde{q}}(\Omega,\Lambda^1)$, the following estimate holds
\begin{equation}
\label{eq:magic-est}
\|d(u\lrcorner\, b)\|_r\le C \left(\|\nabla u\|_{\tilde{p}}\|b\|_q+\|u\|_p(\|\delta b\|_{\tilde{q}}+\|db\|_{\tilde{q}})\right).
\end{equation}
\end{corollary}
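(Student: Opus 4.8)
The plan is to read everything off the magic formula \eqref{eq:magic}, splitting its right-hand side into one genuinely dangerous term and several harmless ones, and then to kill the dangerous term by a Hodge projection. Rearranging \eqref{eq:magic} I would write
\[
d(u\lrcorner\, b) = G - \delta(u\wedge b),\qquad G := \delta u \wedge b - u\wedge\delta b - u\lrcorner\, db + (\nabla u+\nabla u^\top)\cdot(b-b^\top).
\]
Every summand of $G$ is a pointwise bilinear expression carrying at most one derivative, and that derivative always lands on a factor whose gradient is available: $\delta u$ and $\nabla u+\nabla u^\top$ are dominated by $\nabla u\in L^{\tilde p}$, while $\delta b, db\in L^{\tilde q}$ and $b\in L^q$. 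Hölder's inequality with the two admissible splittings $\tfrac1r=\tfrac1{\tilde p}+\tfrac1q=\tfrac1p+\tfrac1{\tilde q}$ then gives
\[
\|G\|_r \le C\bigl(\|\nabla u\|_{\tilde p}\|b\|_q + \|u\|_p(\|\delta b\|_{\tilde q}+\|db\|_{\tilde q})\bigr),
\]
which is exactly the right-hand side of \eqref{eq:magic-est}. Thus the whole difficulty sits in the term $\delta(u\wedge b)$, which contains a full derivative of $b$ and cannot be estimated directly.

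The observation that resolves this is that $d(u\lrcorner b)$ and $\delta(u\wedge b)$ are precisely the two components of the first Hodge decomposition of Theorem~\ref{thm:hodge-dec}. Both are $2$-forms, so I work in $L^r(\Omega,\Lambda^2)$. Plainly $d(u\lrcorner b)\in{\sf R}_r(d)$, and I claim $\delta(u\wedge b)\in{\sf N}_r(\underline\delta)$. Since $\delta^2=0$ we have $\delta\bigl(\delta(u\wedge b)\bigr)=0$, so only the boundary condition $\nu\lrcorner\,\delta(u\wedge b)=0$ needs checking, and this is where the Dirichlet condition ${\rm Tr}_{|_{\partial\Omega}}u=0$ enters. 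Writing $\psi=u\wedge b$, the hypothesis $u=0$ on $\partial\Omega$ forces $\psi=0$ on $\partial\Omega$, so all tangential derivatives of $\psi$ vanish there; decomposing $\partial_i=\nu_i\partial_\nu+T_i$ into its normal and tangential parts, on $\partial\Omega$ only the normal part survives and $\delta\psi=-\nu\lrcorner\,\partial_\nu\psi$, whence $\nu\lrcorner\,\delta\psi=-\nu\lrcorner\,\nu\lrcorner\,\partial_\nu\psi=0$ because contraction twice with the same vector vanishes.

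Consequently $G=d(u\lrcorner b)+\delta(u\wedge b)$ is nothing but the splitting of $G$ along ${\sf R}_r(d)\oplus{\sf N}_r(\underline\delta)$. Applying the bounded projection ${\mathbb P}_{|_{{\sf R}_r(d)}}$ supplied by Theorem~\ref{thm:hodge-dec} and invoking uniqueness of the decomposition, I get $d(u\lrcorner b)={\mathbb P}_{|_{{\sf R}_r(d)}}G$, so that $\|d(u\lrcorner b)\|_r\le\|{\mathbb P}_{|_{{\sf R}_r(d)}}\|\,\|G\|_r$; combined with the bound on $\|G\|_r$ above this is \eqref{eq:magic-est}.

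The hard part will be the boundary computation at the stated low regularity, namely giving rigorous meaning to $\nu\lrcorner\,\delta(u\wedge b)$ when $u,b$ merely belong to the indicated spaces; the pointwise normal/tangential splitting is only clean for smooth forms. I would therefore first run the entire argument for smooth $u$ vanishing on $\partial\Omega$ and smooth $b$, where the magic formula, the boundary identity $\nu\lrcorner\,\delta(u\wedge b)=0$, and the Hodge projection are all classical, and then pass to general $u,b$ by density of smooth forms in $\{u\in L^p:\nabla u\in L^{\tilde p},\ {\rm Tr}_{|_{\partial\Omega}}u=0\}$ and in $\{b\in L^q: db,\delta b\in L^{\tilde q}\}$. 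The a priori distributional object $d(u\lrcorner b)$ is thereby shown to lie in $L^r$, with the uniform bound \eqref{eq:magic-est} surviving the limit.
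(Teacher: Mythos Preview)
Your argument is correct and follows the same overall strategy as the paper: invoke the magic formula, bound the ``good'' terms $G$ by H\"older, eliminate $\delta(u\wedge b)$ via a bounded Hodge projection, and pass to general $u,b$ by density. The one genuine point of divergence is \emph{which} Hodge decomposition of Theorem~\ref{thm:hodge-dec} you use. You place $d(u\lrcorner\, b)\in{\sf R}_r(d)$ and $\delta(u\wedge b)\in{\sf N}_r(\underline\delta)$, which obliges you to verify the normal boundary condition $\nu\lrcorner\,\delta(u\wedge b)=0$ through the normal/tangential splitting of $\partial_i$. The paper instead uses the \emph{second} decomposition $L^r={\sf N}_r(\delta)\oplus{\sf R}_r(\underline d)$: since $u$ vanishes on $\partial\Omega$ one has $u\lrcorner\, b=0$ there, so $\nu\wedge(u\lrcorner\, b)=0$ and hence $d(u\lrcorner\, b)\in{\sf R}_r(\underline d)$ immediately, while $\delta(u\wedge b)\in{\sf N}_r(\delta)$ requires no boundary condition at all. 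This places the boundary verification on the trivial side and avoids your derivative computation entirely. (Equivalently: if in your density step you approximate by $u\in{\mathscr C}^\infty_c(\Omega,\Lambda^1)$ rather than merely smooth $u$ with vanishing trace, your own boundary check becomes trivial too, since then $u\wedge b$ and $\delta(u\wedge b)$ are compactly supported.) Apart from this choice of decomposition the two proofs are identical.
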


\begin{proof}[Proof of Theorem~\ref{thm:magic}]
The proof of the formula \eqref{eq:magic} is completely algebraic: it suffices to write every term involved in 
\eqref{eq:magic} and identify their relations. We start with $u=\sum_{i=1}^nu_ie_i$ and $b=\sum_{j,k=1}^n b_{jk}e_j\wedge e_k$
for $u_i, b_{j,k}\in{\mathscr{C}}^\infty(\Omega)$ and compute the terms on the left hand-side of \eqref{eq:magic}. We have
\[
u\lrcorner\, b=\sum_{j,k=1}^nu_k(b_{kj}-b_{jk})e_j
\]
and then
\[
d(u\lrcorner\, b)=\sum_{i,j,k=1}^n\Bigl(\partial_i\bigl(u_k(b_{kj}-b_{jk})\bigr)\Bigr) e_i\wedge e_j
=\nabla u\cdot(b-b^\top) +\sum_{i,j,k=1}^n\bigr(u_k \partial_i(b_{kj}-b_{jk})\bigl)e_i\wedge e_j.
\]
As for the second term on the left hand-side of \eqref{eq:magic}, we have
\begin{align*}
\delta (u\wedge b)&=-\sum_{\ell=1}^n \partial_\ell \Bigl[e_\ell\lrcorner\Bigl(\sum_{i,j,k=1}^nu_ib_{jk}e_i\wedge e_j\wedge e_k\Bigr)\Bigr]
= \sum_{i,j,k=1}^n \bigl(-\partial_k(u_kb_{ij})+\partial_k(u_ib_{kj})-\partial_k(u_ib_{jk})\bigr)e_i\wedge e_j\\
&= \delta u\wedge b +\nabla u^\top\cdot(b-b^\top)-\sum_{i,j,k=1}^n(u_k\partial_kb_{ij}) e_i\wedge e_j
+\sum_{i,j,k=1}^n \bigl(u_i\partial_k(b_{kj}-b_{jk})\bigr)e_i\wedge e_j.
\end{align*}
We next compute the second and third terms on the right hand-side of \eqref{eq:magic}. First, we have 
\[
db = \sum_{i,j,k=1}^n(\partial_ib_{jk})e_i\wedge e_j\wedge e_k
\] 
and then
\[
u\lrcorner \,db =\sum_{i,j,k=1}^n (u_k\partial_k b_{ij})e_i\wedge e_j -\sum_{i,j,k=1}^n \bigl(u_k\partial_i(b_{kj}-b_{jk})\bigr)e_i\wedge e_j.
\]
Second, we have
\[
\delta b = -\sum_{\ell=1}^n\partial_\ell\Bigl[e_\ell\lrcorner\Bigl(\sum_{j,k=1}^n b_{jk} e_j\wedge e_k\Bigr)\Bigr]
= \sum_{j,k=1}^n \partial_k (b_{jk}-b_{kj})e_j
\]
and then
\[
u\wedge \delta b = \sum_{i,j,k=1}^n \bigl(u_i \partial_k (b_{jk}-b_{kj})\bigr)e_i\wedge e_j.
\]
Putting all these terms together, we obtain \eqref{eq:magic}.
\end{proof}

\begin{proof}[Proof of Corollary~\ref{cor:magic-est}]
We have now all tools to prove \eqref{eq:magic-est}. For $\Omega$, $p,\tilde{p},q, \tilde{q},r$ as in Corollary~\ref{cor:magic-est}, let
$u\in {\mathscr{C}}_c^\infty(\Omega,\Lambda^1)$ and $b\in {\mathscr{C}}^\infty(\overline{\Omega},\Lambda^2)$. 
First, note that in that case $u\lrcorner\,b\in {\mathscr{C}}^\infty(\overline{\Omega},\Lambda^1) \subset L^r(\Omega,\Lambda^1)$, 
$d(u\lrcorner\,b) \in {\mathscr{C}}^\infty(\overline{\Omega},\Lambda^2) \subset L^r(\Omega,\Lambda^2)$
and $\nu \wedge (u\lrcorner\,b) =0$ on $\partial\Omega$. This means that 
$d(u\lrcorner\,b) \in {\sf R}_r(\underline{d})_{|_{\Lambda^2}}$ and therefore
\[
{\mathbb{P}}_{{\sf R}_r(\underline{d})_{|_{\Lambda^2}}}\bigl(d(u\lrcorner\,b)\bigr)=d(u\lrcorner\,b).
\]
It is also clear that $\delta (u\wedge b)\in {\sf N}_r(\delta)_{|_{\Lambda^2}}$ so that, thanks to the (second) Hodge
decomposition in Theorem~\ref{thm:hodge-dec},
\[
{\mathbb{P}}_{{\sf R}_r(\underline{d})_{|_{\Lambda^2}}}\bigl(\delta(u\wedge b)\bigr)=0.
\]
Now, using the magic formula \eqref{eq:magic}, we have that
\[
d(u\lrcorner\,b) = {\mathbb{P}}_{{\sf R}_r(\underline{d})_{|_{\Lambda^2}}}
\bigl(\delta u \wedge b - u\wedge\delta b-u\lrcorner \,db + (\nabla u+\nabla u^\top)\cdot(b-b^\top)\bigr).
\]
Taking the $L^r(\Omega,\Lambda^2)$-norm of this last expression, we obtain
\[
\|d(u\lrcorner\, b)\|_r\lesssim \bigl(\|\delta u\|_{\tilde{p}}+\|du\|_{\tilde{p}}+\|\nabla u\|_{\tilde{p}}\bigr)\|b\|_q
+\|u\|_p\bigl(\|db\|_{\tilde{q}}+\|\delta b\|_{\tilde{q}}\bigr)
\]
which gives \eqref{eq:magic-est} since both $\delta u$ and $du$ can be estimated by $\nabla u$ in $L^{\tilde{p}}$. We conclude the proof
by saying that ${\mathscr{C}}_c^\infty(\Omega,\Lambda^1)$ is dense in 
$\bigl\{u\in L^p(\Omega,\Lambda^1) ; \nabla u\in L^{\tilde{p}}(\Omega,{\mathds{R}}^{n\times n}) \mbox{ and }{\rm Tr}_{|_{\partial\Omega}}u=0\bigr\}$
and ${\mathscr{C}}^\infty(\overline{\Omega},\Lambda^2)$ is dense in the space
$\bigl\{b\in L^q(\Omega,\Lambda^2); db\in L^{\tilde{q}}(\Omega,\Lambda^3)\mbox{ and } \delta b\in L^{\tilde{q}}(\Omega,\Lambda^1)\bigr\}$.
\end{proof}

\section{Existence results in critical spaces}

\subsection{Mild solutions}

As explained in the Introduction, the critical spaces for $u$ and $b$ in the system \eqref{mhd} in dimension~$n$ are
${\mathscr{C}}([0,T);{\mathbb{L}}_\sigma^n(\Omega)\times {\sf R}_n(\underline{d})_{|_{\Lambda^2}})$ where
${\mathbb{L}}^n_\sigma(\Omega)={\sf N}_n(\underline{\delta})_{|_{\Lambda^1}}$ and ${\sf R}_n(\underline{d})_{|_{\Lambda^2}}$
are respectively closed subspaces of $L^n(\Omega,\Lambda^1)$ and $L^n(\Omega,\Lambda^2)$ and $0<T\le \infty$. We say that 
$(u,b)\in {\mathscr{C}}([0,T);{\mathbb{L}}_\sigma^n(\Omega)\times {\sf R}_n(\underline{d})_{|_{\Lambda^2}})$ is a {\tt mild solution} 
of \eqref{mhd-forms}-\eqref{bc-forms} with initial conditions 
$(u_0,b_0)\in {\mathbb{L}}_\sigma^n(\Omega)\times {\sf R}_n(\underline{d})_{|_{\Lambda^2}}$ if $(u,b)$ satisfies
the equations in the following integral form
\begin{align}
\label{eq:mild_u}
u&={\mathscr{S}}_nu_0-{\mathscr{S}}_r*{\mathbb{P}}_r\bigl((u\cdot\nabla)u\bigr)
+{\mathscr{S}}_r*{\mathbb{P}}_r\bigl(\delta b\lrcorner\,b\bigr)\\
\nonumber
&={\frak{u}}_0 +B_1(u,u)+B_2(b,b),\\
\label{eq:mild_b}
b&={\mathscr{M}}_nb_0 +{\mathscr{M}}_r*{\mathbb{Q}}_r\bigl(d(u\lrcorner\,b)\bigr)\\
\nonumber
&={\frak{b}}_0+B_3(u,b),
\end{align}
where ${\mathbb{P}}_r:={\mathbb{P}}_{{\sf N}_r(\underline{\delta})_{|_{\Lambda^1}}}$ denotes the projection from
$L^r(\Omega,\Lambda^1)$ to ${\sf N}_r(\underline{\delta})_{|_{\Lambda^1}}$ (for a suitable $r\in(1,\infty)$),
${\mathbb{Q}}_r:={\mathbb{P}}_{{\sf R}_r(\underline{d})_{|_{\Lambda^2}}}$ denotes the projection from
$L^r(\Omega,\Lambda^2)$ to ${\sf R}_r(\underline{d})_{|_{\Lambda^2}}$,
${\mathscr{S}}_q(t)=e^{-tS_q}$ ($q=n$ or $r$, $S_q$ being the Dirichlet-Stokes operator of \S\,2.1), 
${\mathscr{M}}_q(t)=e^{-tM_q}$ ($M_q$ is the negative Hodge Laplacian $-{\Delta_\bot}$ defined in \S\,2.2
restricted to ${\sf R}_q(\underline{d})_{|_{\Lambda^2}}$ with $q=n$ or $r$). 
The sign $*$ denotes the time convolution on $[0,\infty)$.

The classical strategy to prove existence of a mild solution of \eqref{mhd-forms}-\eqref{bc-forms} is via a fixed point 
argument in a well chosen Banach space, as done by Fujita and Kato in \cite{FK64} for the Navier-Stokes system. 

For $0<T\le \infty$, define
\begin{align*}
{\mathscr{X}}_T:=\bigl\{&(u,b)\in {\mathscr{C}}((0,T);{\mathbb{L}}_\sigma^{2n}(\Omega)\times {\sf R}_{2n}(\underline{d})_{|_{\Lambda^2}});
t\mapsto t^{\frac{1}{4}}u(t)\in {\mathscr{C}}([0,T);{\mathbb{L}}_\sigma^{2n}(\Omega)), \\
&t\mapsto \sqrt{t}\, \nabla u(t) \in {\mathscr{C}}([0,T);L^n(\Omega,{\mathds{R}}^{n\times n})), 
t\mapsto t^{\frac{1}{4}}b(t)\in {\mathscr{C}}([0,T);{\sf R}_{2n}(\underline{d})_{|_{\Lambda^2}})\\
&\mbox{ and }t\mapsto \sqrt{t}\, \delta b(t) \in {\mathscr{C}}([0,T);L^n(\Omega,\Lambda^1)\bigr\}\\
=&{\mathscr{U}}_T\times {\mathscr{B}}_T,
\end{align*}
with norm $\bigl\|(u,b)\bigr\|_{{\mathscr{X}}_T}=\bigl\|u\bigr\|_{{\mathscr{U}}_T}+\bigl\|b\bigr\|_{{\mathscr{B}}_T}$,
where 
\begin{align*}
{\mathscr{U}}_T:=\bigl\{&u\in {\mathscr{C}}((0,T);{\mathbb{L}}_\sigma^{2n}(\Omega)); 
t\mapsto t^{\frac{1}{4}}u(t)\in {\mathscr{C}}([0,T);{\mathbb{L}}_\sigma^{2n}(\Omega)) \\
&\mbox{and } t\mapsto \sqrt{t}\, \nabla u(t) \in {\mathscr{C}}([0,T);L^n(\Omega,{\mathds{R}}^{n\times n}))\bigr\}
\end{align*}
endowed with the norm 
\[
\bigl\|u\bigr\|_{{\mathscr{U}}_T}:=
\sup_{0< t <T}\bigl\|t^{\frac{1}{4}}u(t)\bigr\|_{2n}+\sup_{0<t<T}\bigl\|\sqrt{t}\,\nabla u(t)\bigr\|_n,
\]
and
\begin{align*}
{\mathscr{B}}_T:=\bigl\{&b\in {\mathscr{C}}((0,T);{\sf R}_{2n}(\underline{d})_{|_{\Lambda^2}}) ;
t\mapsto t^{\frac{1}{4}}b(t)\in {\mathscr{C}}([0,T);{\sf R}_{2n}(\underline{d})_{|_{\Lambda^2}})\\
&\mbox{and }t\mapsto \sqrt{t}\, \delta b(t) \in {\mathscr{C}}([0,T);L^n(\Omega,\Lambda^1))\bigr\}
\end{align*}
endowed with the norm
\[
\bigl\|b\bigr\|_{{\mathscr{B}}_T}:=
\sup_{0< t <T}\bigl\|t^{\frac{1}{4}}b(t)\bigr\|_{2n}+\sup_{0<t<T}\bigl\|\sqrt{t}\,\delta b(t)\bigr\|_n.
\]
We state here a collection of lemmas which proofs we defer to the Appendix. The results are classical, but need to be
tailored to our framework.

\begin{lemma}
\label{lem:initialconditions}
For all $(u_0,b_0)\in {\mathbb{L}}_\sigma^n(\Omega)\times {\sf R}_n(\underline{d})_{|_{\Lambda^2}}$, we
have that $({\frak{u}}_0, {\frak{b}}_0)\in {\mathscr{X}}_T$ for all $T\in(0,\infty]$. Moreover, there exists a constant
$\gamma>0$ (independent of $T>0$) such that
\[
\bigl\|({\frak{u}}_0, {\frak{b}}_0)\bigr\|_{{\mathscr{X}}_T}\le \gamma (\|u_0\|_n+\|b_0\|_n).
\]
We also have that
\[
\bigl\|({\frak{u}}_0, {\frak{b}}_0)\bigr\|_{{\mathscr{X}}_T}\xrightarrow[T\to 0]{}0.
\]
\end{lemma}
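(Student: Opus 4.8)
The plan is to recognize that $\mathfrak{u}_0 = \mathscr{S}_n u_0 = e^{-tS_n}u_0$ and $\mathfrak{b}_0 = \mathscr{M}_n b_0 = e^{t\Delta_\bot}b_0$ are precisely the free evolutions under the Dirichlet--Stokes semigroup and the Hodge--Laplacian semigroup, so the estimates needed are exactly those furnished by Corollary~\ref{cor:StokesImprovedRegularity} and Corollary~\ref{cor:HodgeLaplacianImprovedRegularity}. First I would fix the parameters: to pass from the initial regularity $L^n$ to the working space $L^{2n}$, set $p=n$ and $q=2n$, which by the relation $\tfrac1q=\tfrac1p-\tfrac{\alpha}{n}$ forces $\alpha=\tfrac12$ (and indeed $\tfrac12<\min\{1,\tfrac{n}{n}\}=1$, so the hypotheses of both corollaries are met). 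With this choice the exponents $\tfrac{\alpha}{2}=\tfrac14$ and $\tfrac{1+\alpha}{2}=\tfrac34$ match exactly the weights $t^{1/4}$ and $\sqrt t\cdot t^{1/4}$ appearing in the four seminorms defining $\mathscr{X}_T$.

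Next I would verify each of the four seminorms in turn. For $u$: Corollary~\ref{cor:StokesImprovedRegularity} with $(p,q,\alpha)=(n,2n,\tfrac12)$ gives $\|t^{1/4}e^{-tS_n}u_0\|_{2n}\lesssim\|u_0\|_n$ uniformly in $t$, controlling $\sup_t\|t^{1/4}\mathfrak{u}_0(t)\|_{2n}$; and the gradient part of the same corollary gives $\|t^{3/4}\nabla e^{-tS_n}u_0\|_{2n}\lesssim\|u_0\|_n$, but since the $\mathscr{U}_T$-norm measures $\|\sqrt t\,\nabla u\|_n$ I would instead take $\alpha=0$, $q=n$, so that $\|\sqrt t\,\nabla e^{-tS_n}u_0\|_n\lesssim\|u_0\|_n$ directly from \eqref{eq:Stokesest}. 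For $b$: Corollary~\ref{cor:HodgeLaplacianImprovedRegularity}(i) applied to $\Delta_\bot$ on ${\sf R}_n(\underline{d})_{|_{\Lambda^2}}$ gives $\|t^{1/4}e^{t\Delta_\bot}b_0\|_{2n}\lesssim\|b_0\|_n$ (with $\alpha=\tfrac12$) and $\|\sqrt t\,\delta e^{t\Delta_\bot}b_0\|_n\lesssim\|b_0\|_n$ (with $\alpha=0$). Summing the four bounds yields the claimed inequality $\|(\mathfrak{u}_0,\mathfrak{b}_0)\|_{\mathscr{X}_T}\le\gamma(\|u_0\|_n+\|b_0\|_n)$ with $\gamma$ independent of $T$, since all constants come from $t$-uniform semigroup estimates.

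The continuity claim $(\mathfrak{u}_0,\mathfrak{b}_0)\in\mathscr{C}([0,T);\cdots)$ up to $t=0$ for the weighted quantities reduces to checking that $t^{1/4}e^{-tS_n}u_0\to 0$ and $\sqrt t\,\nabla e^{-tS_n}u_0\to 0$ in the respective norms as $t\to0^+$, and likewise for $b$; strong continuity of the semigroup on the interior of $(0,T)$ is automatic from analyticity, so only the behavior at the origin needs attention. The decay $\|(\mathfrak{u}_0,\mathfrak{b}_0)\|_{\mathscr{X}_T}\to0$ as $T\to0$ is where I expect the main obstacle, since the uniform bound alone does not give it: a weight like $t^{1/4}$ does not tend to zero faster than the $L^{2n}$-norm can blow up for a general $L^n$ datum. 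The standard remedy is a density argument: for $u_0\in\mathscr{C}_c^\infty\cap{\mathbb{L}}_\sigma^n$ (or any dense class in ${\mathbb{L}}_\sigma^n$) one has the stronger bound $\|e^{-tS_n}u_0\|_{2n}\le C\|u_0\|_{2n}$, whence $t^{1/4}\|e^{-tS_n}u_0\|_{2n}\le Ct^{1/4}\|u_0\|_{2n}\to0$, and similarly for the other three seminorms; then the uniform bound of the previous paragraph lets one approximate a general datum and conclude $\sup_{0<t<T}$ of each weighted seminorm tends to $0$ as $T\to0$ by the usual $\varepsilon/2$ splitting. The same density argument simultaneously delivers the continuity at $t=0$, so I would carry both out together, taking care that the dense class is adapted to each subspace (${\mathbb{L}}_\sigma^n$ for $u_0$ and ${\sf R}_n(\underline{d})_{|_{\Lambda^2}}$ for $b_0$) and that the approximating sequences lie in $L^{2n}$ as required.
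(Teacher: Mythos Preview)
Your proof plan is correct and follows essentially the same approach as the paper: the uniform bound comes from Corollary~\ref{cor:StokesImprovedRegularity} and Corollary~\ref{cor:HodgeLaplacianImprovedRegularity}(i) with the parameter choices you indicate, and the decay as $T\to0$ is obtained by exactly the density-plus-uniform-bound argument you describe. The paper's specific dense classes are $W^{1,n}(\Omega,\Lambda^1)\cap{\mathbb{L}}_\sigma^n(\Omega)$ for $u_0$ and ${\sf R}_{2n}(\underline{d})_{|_{\Lambda^2}}\cap {\sf D}_n(\delta)_{|_{\Lambda^2}}$ for $b_0$, but this is only a minor variation on your outline.
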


\begin{lemma}
\label{lem:B1}
For all $T>0$, the bilinear operator $B_1$ defined for $u_1,u_2\in{\mathscr{U}}_T$ by 
$B_1(u_1,u_2):= -{\mathscr{S}}_n*{\mathbb{P}}_{\frac{2n}{3}}\bigl((u_1\cdot\nabla)u_2\bigr)$
is bounded from ${\mathscr{U}}_T\times {\mathscr{U}}_T$ to ${\mathscr{U}}_T$ for all $T\in(0,\infty]$.
Moreover, for all $u\in {\mathscr{U}}_T$, 
\[
B_1(u,u)\in {\mathscr{C}}([0,T);{\mathbb{L}}_\sigma^n(\Omega)).
\]
\end{lemma}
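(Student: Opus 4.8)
The plan is to run the standard Fujita--Kato bilinear estimate, adapted to the mixed-norm space ${\mathscr{U}}_T$, with the heat-kernel smoothing replaced by the Stokes estimates of Corollary~\ref{cor:StokesImprovedRegularity}. Set $r=\tfrac{2n}{3}$, which lies in $(1,\infty)$ for $n\ge 3$ and satisfies $\tfrac1r=\tfrac1{2n}+\tfrac1n$. For $u_1,u_2\in{\mathscr{U}}_T$ I would first bound the nonlinearity pointwise in time by H\"older: $\|(u_1\cdot\nabla)u_2(s)\|_r\le\|u_1(s)\|_{2n}\,\|\nabla u_2(s)\|_n\lesssim s^{-3/4}\|u_1\|_{{\mathscr{U}}_T}\|u_2\|_{{\mathscr{U}}_T}$, using the two weights defining $\|\cdot\|_{{\mathscr{U}}_T}$. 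Since the Helmholtz projection ${\mathbb{P}}_r$ onto ${\sf N}_r(\underline{\delta})_{|_{\Lambda^1}}$ is bounded on $L^r$ by Theorem~\ref{thm:hodge-dec}, the forcing $f(s):={\mathbb{P}}_r\bigl((u_1\cdot\nabla)u_2\bigr)(s)\in{\mathbb{L}}^r_\sigma(\Omega)$ obeys the same bound $\|f(s)\|_r\lesssim s^{-3/4}\|u_1\|_{{\mathscr{U}}_T}\|u_2\|_{{\mathscr{U}}_T}$.

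Next I would feed $f$ through the Duhamel integral $B_1(u_1,u_2)(t)=-\int_0^t{\mathscr{S}}_n(t-s)f(s)\,ds$. By consistency of the Stokes semigroups across the various $L^p$ spaces, ${\mathscr{S}}_n(\tau)$ acting on $f(s)\in L^r$ coincides with $e^{-\tau S_r}f(s)$, so Corollary~\ref{cor:StokesImprovedRegularity} with $p=r$ supplies exactly the two smoothing rates needed for the components of the ${\mathscr{U}}_T$-norm: taking $q=2n$ (so $\alpha=1$) gives $\|{\mathscr{S}}_n(\tau)\|_{r\to 2n}\lesssim\tau^{-1/2}$, and taking $q=n$ in the gradient estimate (so $\alpha=\tfrac12$) gives $\|\nabla{\mathscr{S}}_n(\tau)\|_{r\to n}\lesssim\tau^{-3/4}$. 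Inserting these and rescaling $s=t\sigma$ turns both resulting time integrals into convergent Euler Beta integrals: $t^{1/4}\|B_1(u_1,u_2)(t)\|_{2n}\lesssim \mathrm{B}(\tfrac14,\tfrac12)\,\|u_1\|_{{\mathscr{U}}_T}\|u_2\|_{{\mathscr{U}}_T}$ and $\sqrt t\,\|\nabla B_1(u_1,u_2)(t)\|_n\lesssim \mathrm{B}(\tfrac14,\tfrac14)\,\|u_1\|_{{\mathscr{U}}_T}\|u_2\|_{{\mathscr{U}}_T}$. Every Beta exponent is strictly positive, so the integrals converge, and the constants are independent of $t$ and hence of $T$, which yields the announced boundedness uniformly in $T\in(0,\infty]$.

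For membership in ${\mathscr{U}}_T$ I still have to verify the continuity and the vanishing at the origin encoded in the spaces ${\mathscr{C}}([0,T);\cdot)$. Divergence-freeness of $B_1(u_1,u_2)(t)$ is immediate since ${\mathbb{P}}_r$ produces divergence-free forms and the Stokes semigroup preserves them. Interior continuity in $t$ is the usual strongly-continuous-semigroup argument, splitting $\int_0^t=\int_0^{t-\varepsilon}+\int_{t-\varepsilon}^t$ and using analyticity on the first piece together with the integrable time singularities on the second. The behaviour as $t\to0$ is what genuinely requires the structure of ${\mathscr{U}}_T$: continuity at $0$ of the weighted functions forces the little-o refinements $\|s^{1/4}u_i(s)\|_{2n}\to0$ and $\|\sqrt s\,\nabla u_i(s)\|_n\to0$, whence $s^{3/4}\|f(s)\|_r\to0$; carrying this vanishing factor through the (scale-invariant) Beta integrals shows that both weighted norms of $B_1(u_1,u_2)(t)$ tend to $0$ as $t\to0$, so the value at $t=0$ is $0$ and the maps are continuous on $[0,T)$.

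Finally, for the \emph{moreover} statement I would estimate directly in $L^n$: Corollary~\ref{cor:StokesImprovedRegularity} with $p=r$, $q=n$, $\alpha=\tfrac12$ gives $\|{\mathscr{S}}_n(\tau)\|_{r\to n}\lesssim\tau^{-1/4}$, so $\|B_1(u,u)(t)\|_n\lesssim\int_0^t(t-s)^{-1/4}s^{-3/4}\,ds\,\|u\|_{{\mathscr{U}}_T}^2=\mathrm{B}(\tfrac34,\tfrac14)\,\|u\|_{{\mathscr{U}}_T}^2$, a bound uniform in $t$; the $L^n$-valued continuity and the vanishing at $0$ follow from the same little-o argument, placing $B_1(u,u)$ in ${\mathscr{C}}([0,T);{\mathbb{L}}_\sigma^n(\Omega))$. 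The main obstacle I anticipate is not the bookkeeping of exponents but these continuity claims up to $t=0$: the uniform bounds by themselves do not produce a limit, and one has to exploit the strict little-o continuity built into ${\mathscr{U}}_T$, combined with the scale-invariance of the Beta integrals, to force the relevant quantities to vanish at the origin.
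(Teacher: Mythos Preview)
Your proof is correct and follows essentially the same approach as the paper's: H\"older on the nonlinearity to land in $L^{2n/3}$, then the $L^p$--$L^q$ smoothing of the Stokes semigroup from Corollary~\ref{cor:StokesImprovedRegularity} with the same exponents ($\alpha=1$ for the $L^{2n}$ estimate, $\alpha=\tfrac12$ for the gradient in $L^n$, $\alpha=\tfrac12$ for the $L^n$ bound in the ``moreover'' part), producing the same Beta integrals. If anything, you are more explicit than the paper about the little-$o$ behaviour at $t=0$ required for genuine membership in ${\mathscr{U}}_T$; the paper's own proof records only the uniform weighted bounds and the $L^n$-continuity.
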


\begin{lemma}
\label{lem:B2}
For all $T>0$, the bilinear operator $B_2$ defined by 
$B_2(b_1,b_2):= {\mathscr{S}}_n*{\mathbb{P}}_{\frac{2n}{3}}\bigl((b_1\lrcorner\,u_2\bigr)$
is bounded from ${\mathscr{B}}_T\times {\mathscr{B}}_T$ to ${\mathscr{U}}_T$ for all $T\in(0,\infty]$. 
Moreover, for all $b \in {\mathscr{B}}_T$, 
\[
B_2(b,b)\in {\mathscr{C}}([0,T);{\mathbb{L}}_\sigma^n(\Omega)).
\]
\end{lemma}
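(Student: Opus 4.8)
The plan is to treat $B_2$ in complete analogy with the Navier--Stokes term $B_1$ of Lemma~\ref{lem:B1}, exploiting that $\delta b_1\lrcorner\,b_2$ has exactly the same scaling as $(u_1\cdot\nabla)u_2$: it is the product of a factor measured in $L^{2n}$ and a one-derivative factor measured in $L^n$. The source $\delta b_1\lrcorner\,b_2$ is a $1$-form, and Hölder's inequality (with $\tfrac{3}{2n}=\tfrac1n+\tfrac1{2n}$) together with the two seminorms of ${\mathscr{B}}_T$ gives
\[
\bigl\|\delta b_1(s)\lrcorner\,b_2(s)\bigr\|_{\frac{2n}{3}}\lesssim \|\delta b_1(s)\|_n\,\|b_2(s)\|_{2n}\lesssim s^{-\frac34}\,\|b_1\|_{{\mathscr{B}}_T}\|b_2\|_{{\mathscr{B}}_T}\qquad(0<s<T).
\]
Since $\tfrac{2n}{3}\in(1,\infty)$, Theorem~\ref{thm:hodge-dec} makes ${\mathbb{P}}_{\frac{2n}{3}}$ bounded on $L^{\frac{2n}{3}}(\Omega,\Lambda^1)$, so the projected source obeys the same $s^{-3/4}$ bound.

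Next I would feed this into the Stokes smoothing of Corollary~\ref{cor:StokesImprovedRegularity} with base exponent $p=\tfrac{2n}{3}$. The target $q=2n$ forces $\alpha=1=\min\{1,\tfrac{n}{p}\}$ --- the admissible endpoint, corresponding to the critical embedding $W^{1,\frac{2n}{3}}\hookrightarrow L^{2n}$ --- and yields $\|{\mathscr{S}}_n(\tau)\|_{\frac{2n}{3}\to 2n}\lesssim\tau^{-1/2}$; the target $q=n$ forces $\alpha=\tfrac12$ and yields $\|\nabla{\mathscr{S}}_n(\tau)\|_{\frac{2n}{3}\to n}\lesssim\tau^{-3/4}$. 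Inserting these and the source bound into $B_2(b_1,b_2)(t)=\int_0^t{\mathscr{S}}_n(t-s){\mathbb{P}}_{\frac{2n}{3}}\bigl(\delta b_1\lrcorner\,b_2\bigr)(s)\,ds$ and substituting $s=t\sigma$ reduces the time integrals to the convergent Beta integrals $\int_0^1(1-\sigma)^{-1/2}\sigma^{-3/4}\,d\sigma$ and $\int_0^1(1-\sigma)^{-3/4}\sigma^{-3/4}\,d\sigma$ (all exponents exceed $-1$), whence $\|B_2(b_1,b_2)(t)\|_{2n}\lesssim t^{-1/4}\|b_1\|_{{\mathscr{B}}_T}\|b_2\|_{{\mathscr{B}}_T}$ and $\|\nabla B_2(b_1,b_2)(t)\|_{n}\lesssim t^{-1/2}\|b_1\|_{{\mathscr{B}}_T}\|b_2\|_{{\mathscr{B}}_T}$. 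Multiplying by $t^{1/4}$ and $\sqrt t$ recovers exactly the two seminorms of ${\mathscr{U}}_T$, so $B_2$ is bounded ${\mathscr{B}}_T\times{\mathscr{B}}_T\to{\mathscr{U}}_T$.

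For the membership in ${\mathscr{U}}_T$ and the ``moreover'' part, I would first note that ${\mathbb{P}}_{\frac{2n}{3}}$ and ${\mathscr{S}}_n$ keep us among solenoidal forms, so the values lie in ${\mathbb{L}}^{2n}_\sigma(\Omega)$, and that continuity of $t\mapsto t^{1/4}B_2(t)$ and $t\mapsto\sqrt t\,\nabla B_2(t)$ on $(0,T)$ is the standard continuity of a mild convolution against the analytic semigroup $({\mathscr{S}}_n)$ with a source singularity $s^{-3/4}$ that is integrable at $0$. The $L^n$ bound of the ``moreover'' statement comes from Corollary~\ref{cor:StokesImprovedRegularity} with $q=n$, $\alpha=\tfrac12$ in its zeroth-order form $\|{\mathscr{S}}_n(\tau)\|_{\frac{2n}{3}\to n}\lesssim\tau^{-1/4}$, giving the uniform bound $\|B_2(b,b)(t)\|_n\lesssim\int_0^1(1-\sigma)^{-1/4}\sigma^{-3/4}\,d\sigma\cdot\|b\|_{{\mathscr{B}}_T}^2$.

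The delicate point is continuity up to $t=0$ with limit $0$, needed so that the mild solution attains its datum. Here the relevant time integral is scale-invariant, so the uniform bound alone yields no decay; instead I would write $\|\delta b(s)\lrcorner\,b(s)\|_{\frac{2n}{3}}\lesssim s^{-3/4}\eta(s)$ with $\eta(s):=\bigl(\sqrt s\,\|\delta b(s)\|_n\bigr)\bigl(s^{1/4}\|b(s)\|_{2n}\bigr)$, so that $\|B_2(b,b)(t)\|_n\lesssim\int_0^1(1-\sigma)^{-1/4}\sigma^{-3/4}\eta(t\sigma)\,d\sigma$. Since $\eta$ is bounded and $\eta(s)\to0$ as $s\to0$ for the forms occurring in the fixed-point scheme (consistent with the vanishing at $0$ recorded in Lemma~\ref{lem:initialconditions}), dominated convergence forces $\|B_2(b,b)(t)\|_n\to0$; the same device with the weights $t^{1/4}$ and $\sqrt t$ handles the two ${\mathscr{U}}_T$-seminorms at the origin. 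I expect this loss-free $L^n$ continuity at $t=0$, where no smoothing margin is available and one must exploit the decay of $\eta$ rather than a clean power of $t$, to be the main obstacle, alongside checking that the endpoint $\alpha=1$ is legitimately covered by Corollary~\ref{cor:StokesImprovedRegularity}.
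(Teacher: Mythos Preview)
Your proposal is correct and follows essentially the same route as the paper: the paper's proof of Lemma~\ref{lem:B2} simply observes that $\|\delta b_1(s)\lrcorner\,b_2(s)\|_{\frac{2n}{3}}\le\|\sqrt{s}\,\delta b_1(s)\|_n\|s^{1/4}b_2(s)\|_{2n}\le\|b_1\|_{{\mathscr{B}}_T}\|b_2\|_{{\mathscr{B}}_T}$ and then refers back to the proof of Lemma~\ref{lem:B1} verbatim, which is exactly your strategy. Your concern about the endpoint $\alpha=1$ is unwarranted since Corollary~\ref{cor:StokesImprovedRegularity} explicitly allows $\alpha\le\min\{1,n/p\}$ and here $n/p=3/2>1$; and your dominated-convergence argument for continuity at $t=0$ via $\eta(s)\to0$ is more careful than what the paper spells out, but consistent with its intent.
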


\begin{lemma}
\label{lem:B3}
For all $T>0$, the bilinear operator $B_3$ defined by 
$B_3(u,b):= {\mathscr{M}}_{\frac{2n}{3}}*{\mathbb{Q}}_{\frac{2n}{3}}\bigl(d(u\lrcorner\,b)\bigr)$
is bounded from ${\mathscr{U}}_T\times {\mathscr{B}}_T$ to ${\mathscr{B}}_T$ for all $T\in(0,\infty]$.
Moreover, for all $(u,b)\in {\mathscr{X}}_T$, 
\[
B_3(u,b) \in {\mathscr{C}}([0,T);{\sf R}_n(\underline{d})_{|_{\Lambda^2}}).
\]
\end{lemma}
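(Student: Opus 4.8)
The plan is to prove Lemma~\ref{lem:B3} by combining the smoothing estimates for the Hodge-Laplacian semigroup $\mathscr{M}$ from Corollary~\ref{cor:HodgeLaplacianImprovedRegularity} with the nonlinear estimate \eqref{eq:magic-est} from Corollary~\ref{cor:magic-est}, and then reading off the time-weighted norms defining $\mathscr{B}_T$. First I would estimate the nonlinearity $d(u\lrcorner\,b)$ in an appropriate $L^\rho$ norm at a fixed time $t$. Applying \eqref{eq:magic-est} with the exponents chosen so that $u$ is measured in the two norms controlling $\mathscr{U}_T$ (namely $\|t^{1/4}u\|_{2n}$ and $\|\sqrt{t}\,\nabla u\|_n$) and $b$ in the two norms controlling $\mathscr{B}_T$ (namely $\|t^{1/4}b\|_{2n}$ and $\|\sqrt{t}\,\delta b\|_n$), I obtain a pointwise-in-time bound of the form
\[
\bigl\|d(u\lrcorner\,b)(t)\bigr\|_\rho \lesssim t^{-\theta}\,\|u\|_{\mathscr{U}_T}\,\|b\|_{\mathscr{B}_T}
\]
for a suitable $\rho\in(1,\infty)$ and exponent $\theta<1$ read off from the scaling $\frac1\rho=\frac1{\tilde p}+\frac1q=\frac1p+\frac1{\tilde q}$ in Corollary~\ref{cor:magic-est}. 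The natural choice pairs $\tilde p=n$, $q=2n$ (giving the $\|\nabla u\|_n\|b\|_{2n}$ term) with $p=2n$, $\tilde q=n$ (giving the $\|u\|_{2n}\|\delta b\|_n+\|u\|_{2n}\|db\|_n$ terms); both pairings produce $\frac1\rho=\frac1n+\frac1{2n}=\frac{3}{2n}$, so $\rho=\frac{2n}{3}$, which is exactly the integrability index $\mathbb{Q}_{\frac{2n}{3}}$ appearing in the definition of $B_3$.

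Next I would feed this into the convolution $B_3(u,b)=\mathscr{M}_{\frac{2n}3}*\mathbb{Q}_{\frac{2n}3}\bigl(d(u\lrcorner\,b)\bigr)$. Since $\mathbb{Q}_{\frac{2n}3}$ is bounded on $L^{\frac{2n}3}$ by Theorem~\ref{thm:hodge-dec}, it suffices to estimate $\int_0^t \mathscr{M}(t-s)\,g(s)\,ds$ where $\|g(s)\|_{\frac{2n}3}\lesssim s^{-\theta}$. To control $\sup_t \|t^{1/4}B_3(t)\|_{2n}$ I apply the first estimate of Corollary~\ref{cor:HodgeLaplacianImprovedRegularity}(i), taking $p=\frac{2n}3$ and choosing $\alpha$ so that the target exponent is $2n$; this determines $\alpha=1-\frac{n}{2n}\cdot\frac{3}{n}=\tfrac32$ — here one must check that the admissible range $0\le\alpha<\min\{1,\frac np\}=\min\{1,\frac32\}=1$ is respected, which forces me to split the smoothing as $\mathscr{M}(t-s)=\mathscr{M}(\tfrac{t-s}2)\mathscr{M}(\tfrac{t-s}2)$ and apply Corollary~\ref{cor:HodgeLaplacianImprovedRegularity} in two steps rather than with a single excessive $\alpha$. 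To control $\sup_t \|\sqrt{t}\,\delta B_3(t)\|_n$ I instead use the second ($\delta$-gradient) estimate of Corollary~\ref{cor:HodgeLaplacianImprovedRegularity}(i), mapping $L^{\frac{2n}3}$ into $L^n$ with the factor $(t-s)^{-\frac{1+\alpha}2}$ for the appropriate $\alpha$. In each case the time integral reduces to a Beta-function integral $\int_0^t (t-s)^{-a}s^{-\theta}\,ds = t^{1-a-\theta}\,B(1-a,1-\theta)$, and the exponents have been arranged by scaling so that the residual power of $t$ is exactly $-\frac14$ (respectively $-\frac12$), yielding the two defining seminorms of $\mathscr{B}_T$; the constant is independent of $T$, which gives the bound for all $T\in(0,\infty]$.

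Finally I would establish the continuity statement $B_3(u,b)\in\mathscr{C}([0,T);\mathsf{R}_n(\underline d)_{|_{\Lambda^2}})$, including continuity up to $t=0$ with value $0$. The containment in $\mathsf{R}_n(\underline d)_{|_{\Lambda^2}}$ follows because each $\mathbb{Q}_{\frac{2n}3}(d(u\lrcorner\,b))(s)$ lies in $\mathsf{R}_{\frac{2n}3}(\underline d)$, this range is preserved by $\mathscr{M}$ (the Hodge-Laplacian $\Delta_\bot$ leaves $\mathsf{R}(\underline d)$ invariant, Corollary~\ref{cor:hodge-dirac-laplacian}), and the resulting $L^n$-bound places the value in $\mathsf{R}_n(\underline d)_{|_{\Lambda^2}}$. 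Strong continuity in $t$ is obtained by the standard density/approximation argument for convolutions with an analytic semigroup — approximating $g$ by smooth data and using the uniform bounds just proved — exactly as in the classical Navier-Stokes construction of \cite{FK64}. The main obstacle I anticipate is the bookkeeping of the exponent $\alpha$: the single-step smoothing demanded by the scaling sits at or beyond the endpoint of the admissible range in Corollary~\ref{cor:HodgeLaplacianImprovedRegularity}, so the semigroup must be factored and the estimate iterated, and one must verify that the intermediate integrability indices remain in $(1,\infty)$ and that the composed time exponents still integrate to a convergent Beta integral with the correct residual power of $t$.
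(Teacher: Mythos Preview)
Your approach is essentially the paper's: estimate $d(u\lrcorner\,b)$ in $L^{2n/3}$ via Corollary~\ref{cor:magic-est}, then apply the smoothing of $\mathscr{M}$ from Corollary~\ref{cor:HodgeLaplacianImprovedRegularity} and compute Beta integrals. Two points need correction.

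First, and most importantly, you have not dealt with the term $\|u\|_{2n}\|db\|_n$ that Corollary~\ref{cor:magic-est} produces. The space $\mathscr{B}_T$ controls only $\delta b$, not $db$, so without further input the estimate does not close. The missing observation is that $b(s)\in\mathsf{R}_{2n}(\underline d)_{|_{\Lambda^2}}$ for every $s\in(0,T)$, and since $d\circ\underline d=0$ this forces $db(s)=0$ identically. The paper states this explicitly before invoking \eqref{eq:magic-est}; you should too.

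Second, your computation of $\alpha$ is off: from $\tfrac1q=\tfrac1p-\tfrac{\alpha}{n}$ with $p=\tfrac{2n}{3}$ and $q=2n$ one gets $\alpha=1$, not $\tfrac32$. This sits exactly at the endpoint of the range $0\le\alpha<1$ in Corollary~\ref{cor:HodgeLaplacianImprovedRegularity}, so your instinct to factor $\mathscr{M}(t-s)=\mathscr{M}(\tfrac{t-s}{2})\mathscr{M}(\tfrac{t-s}{2})$ and iterate with $\alpha=\tfrac12$ twice is a clean and legitimate fix; the paper simply uses $\alpha=1$ directly. With this value the time kernel is $(t-s)^{-1/2}$ and, since $\theta=\tfrac34$, the Beta integral gives the residual $t^{-1/4}$. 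For the $\delta$-part take $\alpha=\tfrac12$, $q=n$, kernel $(t-s)^{-3/4}$, and the Beta integral yields $t^{-1/2}$---exactly the two weights defining $\mathscr{B}_T$. Once these two corrections are made, your argument coincides with the paper's.
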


\subsection{Global existence}

We are now in position to state and prove a more precise version of the global existence result announced in the Introduction.

\begin{theorem}
\label{thm:globalexistence}
Let $\Omega\subset{\mathds{R}}^n$ be a bounded ${\mathscr{C}}^1$ domain and let $T\in(0,\infty]$.
There exists $\varepsilon>0$ such that for all $(u_0,b_0)\in {\mathbb{L}}_\sigma^n(\Omega)\times {\sf R}_n(\underline{d})_{|_{\Lambda^2}}$
satisfying 
$\|u_0\|_n+\|b_0\|_n\le \varepsilon$,
there exists a mild solution $(u,b)\in {\mathscr{C}}([0,T);{\mathbb{L}}_\sigma^n(\Omega)\times {\sf R}_n(\underline{d})_{|_{\Lambda^2}})$
of \eqref{mhd-forms}-\eqref{bc-forms} with initial conditions $(u_0,b_0)$.
\end{theorem}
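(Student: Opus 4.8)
The plan is to realize the mild solution as a fixed point of the map associated with \eqref{eq:mild_u}--\eqref{eq:mild_b} on the Banach space ${\mathscr{X}}_T={\mathscr{U}}_T\times{\mathscr{B}}_T$, exactly in the spirit of Fujita--Kato \cite{FK64}. First I would package the three bilinear terms into a single bounded bilinear map on ${\mathscr{X}}_T$: writing $X=(u,b)$ and setting $\mathfrak{L}:=({\frak{u}}_0,{\frak{b}}_0)$, define
\[
{\mathbf{B}}\bigl((u_1,b_1),(u_2,b_2)\bigr):=\bigl(B_1(u_1,u_2)+B_2(b_1,b_2),\,B_3(u_1,b_2)\bigr),
\]
so that the diagonal ${\mathbf{B}}(X,X)=\bigl(B_1(u,u)+B_2(b,b),\,B_3(u,b)\bigr)$ reproduces precisely the nonlinear parts of \eqref{eq:mild_u} and \eqref{eq:mild_b}. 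With this notation the pair of integral equations becomes the single fixed-point equation $X=\mathfrak{L}+{\mathbf{B}}(X,X)$ in ${\mathscr{X}}_T$.

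Next I would record the two quantitative inputs supplied by the preceding lemmas, taking care that both hold with constants independent of $T\in(0,\infty]$ (this uniformity is exactly what makes the argument yield \emph{global} existence). By Lemma~\ref{lem:B1}, Lemma~\ref{lem:B2} and Lemma~\ref{lem:B3}, each of $B_1,B_2,B_3$ is bounded on the relevant product of ${\mathscr{U}}_T$ and ${\mathscr{B}}_T$ with a $T$-independent norm, hence there is a constant $C_B>0$, independent of $T$, with $\|{\mathbf{B}}(X_1,X_2)\|_{{\mathscr{X}}_T}\le C_B\,\|X_1\|_{{\mathscr{X}}_T}\|X_2\|_{{\mathscr{X}}_T}$. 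By Lemma~\ref{lem:initialconditions}, $\mathfrak{L}\in{\mathscr{X}}_T$ with $\|\mathfrak{L}\|_{{\mathscr{X}}_T}\le\gamma(\|u_0\|_n+\|b_0\|_n)$, again with $\gamma$ independent of $T$.

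Then I would invoke the standard abstract contraction lemma underlying the Fujita--Kato method: in a Banach space, if ${\mathbf{B}}$ is bounded bilinear with constant $C_B$ and $\|\mathfrak{L}\|<\tfrac{1}{4C_B}$, then $X=\mathfrak{L}+{\mathbf{B}}(X,X)$ has a solution with $\|X\|\le 2\|\mathfrak{L}\|$, obtained as the limit of the iterates $X_0=\mathfrak{L}$, $X_{k+1}=\mathfrak{L}+{\mathbf{B}}(X_k,X_k)$. Choosing $\varepsilon:=\tfrac{1}{8C_B\gamma}$ guarantees $4C_B\|\mathfrak{L}\|_{{\mathscr{X}}_T}\le 4C_B\gamma\varepsilon<1$ whenever $\|u_0\|_n+\|b_0\|_n\le\varepsilon$, so the fixed point $(u,b)\in{\mathscr{X}}_T$ exists for every $T\in(0,\infty]$, including $T=\infty$.

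Finally I would upgrade this ${\mathscr{X}}_T$-solution to a genuine mild solution in the critical class ${\mathscr{C}}([0,T);{\mathbb{L}}_\sigma^n(\Omega)\times{\sf R}_n(\underline{d})_{|_{\Lambda^2}})$. The linear part $\mathfrak{L}=(e^{-tS_n}u_0,e^{-tM_n}b_0)$ lies in this class with $\mathfrak{L}(0)=(u_0,b_0)$ by strong continuity of the two analytic semigroups, while the ``moreover'' statements in Lemma~\ref{lem:B1}, Lemma~\ref{lem:B2} and Lemma~\ref{lem:B3} give $B_1(u,u)+B_2(b,b)\in{\mathscr{C}}([0,T);{\mathbb{L}}_\sigma^n(\Omega))$ and $B_3(u,b)\in{\mathscr{C}}([0,T);{\sf R}_n(\underline{d})_{|_{\Lambda^2}})$; adding these to $\mathfrak{L}$ places $(u,b)$ in the critical space with the correct initial trace. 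I expect no serious obstacle in the diagonal assembly or the fixed-point step, since all the analytic difficulty has been absorbed into Corollary~\ref{cor:StokesImprovedRegularity}, Corollary~\ref{cor:HodgeLaplacianImprovedRegularity}, Corollary~\ref{cor:magic-est} and the four lemmas; the one point deserving genuine attention is the $T$-uniformity of $C_B$ and $\gamma$, which is exactly what distinguishes this global result (forcing the smallness hypothesis $\|u_0\|_n+\|b_0\|_n\le\varepsilon$) from the local statement, where instead $\|\mathfrak{L}\|_{{\mathscr{X}}_T}\to 0$ as $T\to 0$ removes the size restriction.
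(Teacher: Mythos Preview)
Your proposal is correct and follows essentially the same route as the paper: you recast \eqref{eq:mild_u}--\eqref{eq:mild_b} as a single fixed-point equation $X=\mathfrak{L}+{\mathbf{B}}(X,X)$ on ${\mathscr{X}}_T$ (the paper calls your ${\mathbf{B}}$ by $\Phi$ and your $\mathfrak{L}$ by ${\frak{U}}_0$), invoke Lemmas~\ref{lem:initialconditions}--\ref{lem:B3} for the $T$-independent bounds, apply the Picard contraction with the threshold $\tfrac{1}{4C_B}$, and then upgrade to ${\mathscr{C}}([0,T);{\mathbb{L}}_\sigma^n(\Omega)\times{\sf R}_n(\underline{d})_{|_{\Lambda^2}})$ via strong continuity of the semigroups together with the ``moreover'' clauses. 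Your emphasis on the $T$-uniformity of $C_B$ and $\gamma$ as the key point separating the global from the local result is exactly right.
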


\begin{proof}
We rewrite the problem of finding mild solutions of \eqref{mhd-forms}-\eqref{bc-forms} on $[0,T)$ as follows:
$U=(u,b)\in {\mathscr{X}}_T$ satisfies $U= {\frak{U}}_0+\Phi(U,U)$ where 
${\frak{U}}_0=({\frak{u}}_0,{\frak{b}}_0)$ and $\Phi:{\mathscr{X}}_T\times{\mathscr{X}}_T\to {\mathscr{X}}_T$
is defined by $\Phi(U_1,U_2)=
\left(B_1(u_1,u_2)+B_2(b_1,b_2) ,B_3(u_1,b_2)\right)$.
Picard's fixed point theorem asserts that such a $U$ exists if 
$\bigl\|{\frak{U}}_0\bigr\|_{{\mathscr{X}}_T}\le \epsilon <\frac{1}{4C_T}$ with 
$C_T=\bigl\|\Phi\bigr\|_{{\mathscr{X}}_T\times {\mathscr{X}}_T \to {\mathscr{X}}_T}$: this is granted by 
Lemmas~\ref{lem:B1}, \ref{lem:B2}, \ref{lem:B3}. It suffices to choose $\varepsilon=\frac{\epsilon}{\gamma}$ 
(where $\gamma$ is the constant appearing in Lemma~\ref{lem:initialconditions}) such that 
$\|u_0\|_n+\|b_0\|_n\le \varepsilon$ implies $\bigl\|{\frak{U}}_0\bigr\|_{{\mathscr{X}}_T}\le \epsilon$. 
Now, thanks to Lemmas~\ref{lem:B1}, \ref{lem:B2}, \ref{lem:B3}, we know that $\Phi(U,U)$ belongs to  
${\mathscr{C}}([0,T);{\mathbb{L}}_\sigma^n(\Omega)\times {\sf R}_n(\underline{d})_{|_{\Lambda^2}})$. It remains to show that 
${\frak{U}}_0\in{\mathscr{C}}([0,T);{\mathbb{L}}_\sigma^n(\Omega)\times {\sf R}_n(\underline{d})_{|_{\Lambda^2}})$.
This is immediate taking into account the fact that ${\mathscr{S}}_n$ and ${\mathscr{M}}_n$ are strongly continuous semigroups
on ${\mathbb{L}}_\sigma^n(\Omega)$ for ${\mathscr{S}}$ (see Theorem~1.1 in \cite{GS25})
and on ${\sf R}_n(\underline{d})_{|_{\Lambda^2}}$ for ${\mathscr{M}}$ (see Corollary~\ref{cor:hodge-dirac-laplacian}).
\end{proof}

\subsection{Local existence}

The local existence result reads as follows.

\begin{theorem}
\label{thm:localexistence}
Let $\Omega\subset{\mathds{R}}^n$ be a bounded ${\mathscr{C}}^1$ domain and let 
$(u_0,b_0)\in {\mathbb{L}}_\sigma^n(\Omega)\times {\sf R}_n(\underline{d})_{|_{\Lambda^2}}$.
Then there exists $T>0$ depending on $(u_0,b_0)$ such that \eqref{mhd-forms}-\eqref{bc-forms} with initial conditions $(u_0,b_0)$
admits a mild solution $(u,b)\in {\mathscr{C}}([0,T);{\mathbb{L}}_\sigma^n(\Omega)\times {\sf R}_n(\underline{d})_{|_{\Lambda^2}})$.
\end{theorem}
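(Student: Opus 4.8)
The plan is to run exactly the same Picard fixed-point scheme as in the proof of Theorem~\ref{thm:globalexistence}, but to exploit smallness of the time horizon $T$ in place of smallness of the data. Writing $U=(u,b)$, the mild formulation \eqref{eq:mild_u}--\eqref{eq:mild_b} is the fixed-point equation $U={\frak{U}}_0+\Phi(U,U)$ on ${\mathscr{X}}_T$, where ${\frak{U}}_0=({\frak{u}}_0,{\frak{b}}_0)$ and $\Phi(U_1,U_2)=\bigl(B_1(u_1,u_2)+B_2(b_1,b_2),\,B_3(u_1,b_2)\bigr)$. Since no size restriction is imposed on $(u_0,b_0)$, the point is that the free-evolution term ${\frak{U}}_0$ can be made small in the ${\mathscr{X}}_T$-norm by shrinking $T$.

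First I would record that the bilinear estimates furnished by Lemmas~\ref{lem:B1}, \ref{lem:B2} and \ref{lem:B3} are \emph{uniform} in $T$: each $B_i$ is bounded on the relevant product of ${\mathscr{U}}_T$ and ${\mathscr{B}}_T$ with a constant valid for every $T\in(0,\infty]$, which is the manifestation of the scaling invariance of the critical norm $\|\cdot\|_{{\mathscr{X}}_T}$. Consequently there is a constant $C>0$, independent of $T$, such that $C_T:=\bigl\|\Phi\bigr\|_{{\mathscr{X}}_T\times{\mathscr{X}}_T\to{\mathscr{X}}_T}\le C$. Next—and this is the only place where the argument genuinely departs from the global case—I would invoke the limit in Lemma~\ref{lem:initialconditions}, namely $\bigl\|{\frak{U}}_0\bigr\|_{{\mathscr{X}}_T}\to 0$ as $T\to 0$ for arbitrary $(u_0,b_0)\in{\mathbb{L}}_\sigma^n(\Omega)\times{\sf R}_n(\underline{d})_{|_{\Lambda^2}}$. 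I would therefore choose $T>0$ small enough that $\bigl\|{\frak{U}}_0\bigr\|_{{\mathscr{X}}_T}<\tfrac{1}{4C}\le\tfrac{1}{4C_T}$, and then Picard's fixed-point theorem produces a (unique small) solution $U\in{\mathscr{X}}_T$ of $U={\frak{U}}_0+\Phi(U,U)$, precisely as in Theorem~\ref{thm:globalexistence}.

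To upgrade to the asserted continuity in the critical space, I would repeat the closing argument of the global case: Lemmas~\ref{lem:B1}, \ref{lem:B2}, \ref{lem:B3} guarantee that $\Phi(U,U)\in{\mathscr{C}}\bigl([0,T);{\mathbb{L}}_\sigma^n(\Omega)\times{\sf R}_n(\underline{d})_{|_{\Lambda^2}}\bigr)$, while ${\frak{U}}_0$ lies in the same space because ${\mathscr{S}}_n$ and ${\mathscr{M}}_n$ are strongly continuous semigroups on ${\mathbb{L}}_\sigma^n(\Omega)$ and on ${\sf R}_n(\underline{d})_{|_{\Lambda^2}}$ respectively; summing yields $(u,b)\in{\mathscr{C}}\bigl([0,T);{\mathbb{L}}_\sigma^n(\Omega)\times{\sf R}_n(\underline{d})_{|_{\Lambda^2}}\bigr)$. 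Since essentially all the analytic content is concentrated in the deferred lemmas, there is no serious obstacle here; the one point deserving care is that the fixed-point radius $\tfrac{1}{4C_T}$ must not degenerate as $T\to 0$, which is exactly why the \emph{uniform}-in-$T$ bilinear bound (rather than a merely $T$-dependent one) is what makes the vanishing of $\bigl\|{\frak{U}}_0\bigr\|_{{\mathscr{X}}_T}$ usable.
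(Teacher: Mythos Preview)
Your proof is correct and follows essentially the same route as the paper: you invoke the uniform-in-$T$ bilinear bounds from Lemmas~\ref{lem:B1}--\ref{lem:B3} together with the vanishing of $\bigl\|{\frak{U}}_0\bigr\|_{{\mathscr{X}}_T}$ as $T\to 0$ from Lemma~\ref{lem:initialconditions} to trigger Picard's fixed-point theorem, exactly as the paper does. Your write-up is in fact slightly more explicit than the paper's, spelling out the continuity upgrade in the critical space and articulating why uniformity of $C_T$ is the crucial point.
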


\begin{proof}
The proof follows the lines of the proof of global existence. With the same notations as in the proof of 
Theorem~\ref{thm:globalexistence}, we can find $T>0$ such that 
$\bigl\|{\frak{U}}_0\bigl\|_{{\mathscr{X}}_T}\le \varepsilon$ for $0<\varepsilon<\frac{1}{4C_T}$.
This is possible since $C_T=\bigl\|\Phi\bigr\|_{{\mathscr{X}}_T\times {\mathscr{X}}_T \to {\mathscr{X}}_T}$ does not depend on $T>0$
(see the proofs of Lemmas~\ref{lem:B1}, \ref{lem:B2}, \ref{lem:B3})
and $\bigl\|{\frak{U}}_0\bigr\|_{{\mathscr{X}}_T}\xrightarrow[T\to 0]{}0$, as proved in Lemma~\ref{lem:initialconditions}.
\end{proof}

\appendix

\section{Deferred proofs}

\subsection{About initial conditions}

\begin{proof}[Proof of Lemma~\ref{lem:initialconditions}]
The fact that $\frak{u}_0$ belongs to ${\mathscr{U}}_T$ comes from the properties of the Dirichlet-Stokes
semigroup stated in Corollary~\ref{cor:StokesImprovedRegularity} with $p=n$, $\alpha=\frac{1}{2}$ and 
$q=2n$ for the part $\bigl\|t^{\frac{1}{4}}\frak{u}_0(t)\bigr\|_{2n}$ in the definition of ${\mathscr{U}}_T$ and from \eqref{eq:Stokesest}
for the part $\bigl\|\sqrt{t}\,\nabla \frak{u}_0(t)\bigr\|_n$ in the definition of ${\mathscr{U}}_T$. We have for all $T>0$
\begin{align*}
\bigl\|\frak{u}_0\bigr\|_{{\mathscr{U}}_T} =& \sup_{0<t<T}\Bigl(\bigl\|t^{\frac{1}{4}}\frak{u}_0(t)\bigr\|_{2n}
+\bigl\|\sqrt{t}\,\nabla\frak{u}_0(t)\bigr\|_n\Bigr)\\
\lesssim &\Bigl(\sup_{t>0}\bigl\|t^{\frac{1}{4}}e^{-tS_n}\bigr\|_{n\to 2n}
+\sup_{t>0}\bigl\|\sqrt{t}\,\nabla e^{-tS_n}\bigr\|_{n\to n}\Bigr)\|u_0\|_n \lesssim \|u_0\|_n,
\end{align*}
where the constants involved do not depend on $T>0$. To prove that $\frak{b}_0$ belongs to ${\mathscr{B}}_T$,
we follow the same lines, using the properties of the semigroup ${\mathscr{M}}_n$ inherited from the properties
of the Hodge-Laplacian proved in Corollary~\ref{cor:HodgeLaplacianImprovedRegularity} $(ii)$. We have for all $T>0$
\begin{align*}
\|\frak{b}_0\|_{{\mathscr{B}}_T} =& 
\sup_{0<t<T}\Bigl(\bigl\|t^{\frac{1}{4}}\frak{b}_0(t)\bigr\|_{2n}+\bigl\|\sqrt{t}\,\delta \frak{b}_0(t)\bigr\|_n\Bigr)\\
\lesssim & \Bigl(\sup_{t>0}\bigl\|t^{\frac{1}{4}}e^{-tM_n}\bigr\|_{{\sf R}_n(\underline{d})_{|_{\Lambda^2}}\to L^{2n}(\Omega,\Lambda^2)}
+\sup_{t>0}\bigl\|\sqrt{t}\,\delta e^{-tM_n}\bigr\|_{{\sf R}_n(\underline{d})_{|_{\Lambda^2}}\to L^n(\Omega,\Lambda^1)}\Bigr)\|b_0\|_n\\
\lesssim &\|b_0\|_n,
\end{align*}
where the constants involved do not depend on $T>0$. This gives the existence of a constant $\gamma>0$ independent of $T>0$
such that 
\[
\bigl\|({\frak{u}}_0, {\frak{b}}_0)\bigr\|_{{\mathscr{X}}_T}\le \gamma (\|u_0\|_n+\|b_0\|_n).
\]
It remains to show that $\bigl\|({\frak{u}}_0, {\frak{b}}_0)\bigr\|_{{\mathscr{X}}_T}\xrightarrow[T\to 0]{}0$. Let 
$(u_0,b_0)\in {\mathbb{L}}_\sigma^n(\Omega)\times {\sf R}_n(\underline{d})_{|_{\Lambda^2}}$. Let $\varepsilon>0$
and choose $u_{0,\varepsilon} \in W^{1,n}(\Omega,\Lambda^1)\cap{\mathbb{L}}_\sigma^n(\Omega)$ such
that $\|u_0 -u_{0,\varepsilon}\|_n\le \frac{\varepsilon}{4\gamma}$. Choose also 
$b_{0,\varepsilon}\in {\sf R}_{2n}(\underline{d})_{|_{\Lambda^2}}\cap {\sf D}_n(\delta)_{|_{\Lambda^2}}$ such
that $\|b_0 -b_{0,\varepsilon}\|_n\le \frac{\varepsilon}{4\gamma}$. By the previous estimate, we have that 
\[
\bigl\|\bigl({\mathscr{S}}_n(u_0 -u_{0,\varepsilon}), {\mathscr{M}}_n(b_0 -b_{0,\varepsilon})\bigr)\bigr\|_{{\mathscr{X}}_T}
\le \tfrac{\varepsilon}{2}.
\]
Now, by the properties of the Dirichlet-Stokes semigroup ${\mathscr{S}}$, we have
\[
\sup_{0<t<T}\|t^{\frac{1}{4}}e^{-tS_n}u_{0,\varepsilon}\|_{2n}\le T^{\frac{1}{4}}\|u_{0,\varepsilon}\|_{2n}
\le T^{\frac{1}{4}}\|u_{0,\varepsilon}\|_n^{\frac{1}{2}}\|\nabla u_{0,\varepsilon}\|_n^{\frac{1}{2}}\xrightarrow[T\to\infty]{}0
\]
and 
\[
\sup_{0<t<T}\|t^{\frac{1}{2}}\nabla e^{-tS_n}u_{0,\varepsilon}\|_n\lesssim T^{\frac{1}{2}}\|u_{0,\varepsilon}\|_{W^{1,n}}
\xrightarrow[T\to\infty]{}0.
\]
The properties of the semigroup ${\mathscr{M}}$ and Proposition~\ref{prop:improvedMaxwell}
entail the following estimates 
\[
\sup_{0<t<T}\|t^{\frac{1}{4}}e^{-tM_n}b_{0,\varepsilon}\|_{2n}\le T^{\frac{1}{4}}\|b_{0,\varepsilon}\|_{2n}
\le T^{\frac{1}{4}}\|b_{0,\varepsilon}\|_n^{\frac{1}{2}}\|\delta b_{0,\varepsilon}\|_n^{\frac{1}{2}}\xrightarrow[T\to\infty]{}0
\]
and 
\[
\sup_{0<t<T}\|t^{\frac{1}{2}}\delta e^{-tM_n}u_{0,\varepsilon}\|_n\lesssim T^{\frac{1}{2}}\|\delta b_{0,\varepsilon}\|_n
\xrightarrow[T\to\infty]{}0.
\]
We then choose $T>0$ small enough so that 
\[
\|{\mathscr{S}}_nu_{0,\varepsilon}\|_{{\mathscr{U}}_T}
=\sup_{0<t<T}\|t^{\frac{1}{4}}e^{-tS_n}u_{0,\varepsilon}\|_{2n}+\sup_{0<t<T}\|t^{\frac{1}{2}}\nabla e^{-tS_n}u_{0,\varepsilon}\|_n
\le \tfrac{\varepsilon}{4}
\]
and 
\[
\|{\mathscr{M}}_nb_{0,\varepsilon}\|_{{\mathscr{B}}_T}
\sup_{0<t<T}\|t^{\frac{1}{4}}e^{-tM_n}b_{0,\varepsilon}\|_{2n}+ \sup_{0<t<T}\|t^{\frac{1}{2}}\delta e^{-tM_n}u_{0,\varepsilon}\|_n
\le \tfrac{\varepsilon}{4}.
\]
Ultimately, this gives
$\bigl\|(\frak{u}_0,\frak{b}_0)\bigr\|_{{\mathscr{X}}_T}\le \varepsilon$. Since $\varepsilon>0$ was arbitrary, we proved that 
$\bigl\|({\frak{u}}_0, {\frak{b}}_0)\bigr\|_{{\mathscr{X}}_T}$ goes to $0$ as $T$ goes to $0$, as claimed.
\end{proof}

\subsection{The $B$ operators}

\begin{proof}[Proof of Lemma~\ref{lem:B1}]
Assume that $u_1,u_2\in{\mathscr{U}}_T$. Then we have that 
$s\mapsto \bigl[(u_1(s)\cdot\nabla)u_2\bigr](s) \in {\mathscr{C}}((0,T);L^{\frac{2n}{3}}(\Omega,{\mathds{R}}^n))$
with 
\[
\bigl\|s^{\frac{3}{4}}\bigl[(u_1(s)\cdot\nabla)u_2\bigr](s)\bigr\|_{\frac{2n}{3}}
\le \bigl\|s^{\frac{1}{4}}u_1(s)\bigr\|_{2n}\bigl\|\sqrt{s}\,\nabla u_2(s)\bigr\|_n
\le \bigl\|u_1\bigr\|_{{\mathscr{U}}_T}\bigl\|u_2\bigr\|_{{\mathscr{U}}_T}
\]
since $L^{2n} \cdot L^n\hookrightarrow L^{\frac{2n}{3}}$. Moreover, $W^{1,\frac{2n}{3}}\hookrightarrow L^{2n}$ in dimension $n$: by the 
properties of the Stokes semigroup ${\mathscr{S}}$ described in \eqref{eq:Stokesest}, we obtain that
\[
\bigl\|-{\mathscr{S}}_n*{\mathbb{P}}_{\frac{2n}{3}}\bigl((u_1\cdot\nabla)u_2\bigr)(t)\bigr\|_{2n}
\lesssim \Bigl(\int_0^t\frac{1}{(t-s)^{\frac{1}{2}}}\frac{1}{s^{\frac{3}{4}}}\, {\rm d}s\Bigr) 
\bigl\|u_1\bigr\|_{{\mathscr{U}}_T}\bigl\|u_2\bigr\|_{{\mathscr{U}}_T}
= c_1 t^{-\frac{1}{4}}\bigl\|u_1\bigr\|_{{\mathscr{U}}_T}\bigl\|u_2\bigr\|_{{\mathscr{U}}_T},
\]
for all $t\in (0,T)$.
By Corollary~\ref{cor:StokesImprovedRegularity} with $p=\frac{2n}{3}$, $\alpha=\frac{1}{2}$ and $q=n$, the same reasoning as before
allows us to estimate $B_1(u_1,u_2)$ as follows:
\[
\bigl\|-\nabla\bigl[{\mathscr{S}}_n*{\mathbb{P}}_{\frac{2n}{3}}\bigl((u_1\cdot\nabla)u_2\bigr)\bigr](t)\bigr\|_n
\lesssim \Bigl(\int_0^t\frac{1}{(t-s)^{\frac{3}{4}}}\frac{1}{s^{\frac{3}{4}}}\, {\rm d}s\Bigr) 
\bigl\|u_1\bigr\|_{{\mathscr{U}}_T}\bigl\|u_2\bigr\|_{{\mathscr{U}}_T}
= \tilde{c}_1 t^{-\frac{1}{2}}\bigl\|u_1\bigr\|_{{\mathscr{U}}_T}\bigl\|u_2\bigr\|_{{\mathscr{U}}_T},
\]
for all $t\in (0,T)$. With these two estimates in hand, we obtain that
$\bigl\|B_1(u_1,u_2)\bigr\|_{{\mathscr{U}}_T}\lesssim \bigl\|u_1\bigr\|_{{\mathscr{U}}_T}\bigl\|u_2\bigr\|_{{\mathscr{U}}_T}$ 
where the constant in the estimate depends only on the norm of ${\mathbb{P}}_{\frac{2n}{3}}$, the quantity in \eqref{eq:Stokesest}, 
$c_1$ and $\tilde{c}_1$, so does not depend on $T>0$.
It remains to prove continuity on $[0,T)$ in ${\mathbb{L}}_\sigma^n(\Omega)$. Thanks to the continuity of $(u_1\cdot\nabla)u_2$ in 
$L^{\frac{2n}{3}}(\Omega,{\mathds{R}}^n))$ and the strong continuity of the Stokes semigroup, the convolution
$-{\mathscr{S}}_n*{\mathbb{P}}_{\frac{2n}{3}}\bigl((u_1\cdot\nabla)u_2\bigr)$ is continuous on $(0,T)$ with values in 
${\mathbb{L}}_\sigma^n(\Omega)$ and again thanks Corollary~\ref{cor:StokesImprovedRegularity} with $p=\frac{2n}{3}$, 
$\alpha=\frac{1}{2}$ and $q=n$ we have the following estimate for all $t\in (0,T)$
\[
\bigl\|-{\mathscr{S}}_n*{\mathbb{P}}_{\frac{2n}{3}}\bigl((u_1\cdot\nabla)u_2\bigr)(t)\bigr\|_n
\lesssim \Bigl(\int_0^t\frac{1}{(t-s)^{\frac{1}{4}}}\frac{1}{s^{\frac{3}{4}}}\, {\rm d}s\Bigr) 
\bigl\|u_1\bigr\|_{{\mathscr{U}}_T}\bigl\|u_2\bigr\|_{{\mathscr{U}}_T}
\lesssim \bigl\|u_1\bigr\|_{{\mathscr{U}}_T}\bigl\|u_2\bigr\|_{{\mathscr{U}}_T}.
\]
This proves the continuity of $B_1(u_1,u_2)$ on $[0,T)$ with values in ${\mathbb{L}}_\sigma^n(\Omega)$ for all
$u_1,u_2 \in {\mathscr{U}}_T$.
\end{proof}

\begin{proof}[Proof of Lemma~\ref{lem:B2}]
The proof of Lemma~\ref{lem:B2} follows exactly the same lines as the one of Lemma~\ref{lem:B1} once
we check that 
$s\mapsto \delta b_1(s)\lrcorner\, b_2(s) \in {\mathscr{C}}((0,T);L^{\frac{2n}{3}}(\Omega,{\mathds{R}}^n))$
with 
\[
\bigl\|s^{\frac{3}{4}}\bigl(\delta b_1(s)\lrcorner\, b_2(s)\bigr)\bigr\|_{\frac{2n}{3}}
\le \bigl\|\sqrt{s}\, \delta b_1(s)\bigr\|_n \bigl\|s^{\frac{1}{4}}b_2(s)\bigr\|_{2n}
\le \bigl\|b_1\bigr\|_{{\mathscr{B}}_T}\bigl\|b_2\bigr\|_{{\mathscr{B}}_T}
\]
which is the case thanks to the definition of ${\mathscr{B}}_T$.
\end{proof}

\begin{proof}[Proof of Lemma~\ref{lem:B3}]
As in the proof of the Lemmas~\ref{lem:B1} and \ref{lem:B2}, we have that 
$s\mapsto d( u(s)\lrcorner\, b(s)) \in {\mathscr{C}}((0,T);L^{\frac{2n}{3}}(\Omega,\Lambda^2))$
with
\[
\bigl\|s^{\frac{3}{4}}d( u(s)\lrcorner\, b(s))\bigr\|_{\frac{2n}{3}}\le 
\bigl\|u\bigr\|_{{\mathscr{U}}_T}\bigl\|b\bigr\|_{{\mathscr{B}}_T}.
\]
This is where we use our magic formula \eqref{eq:magic} or more precisely the estimate \eqref{eq:magic-est}:
since $d b=0$ for $b \in {\mathscr{B}}_T$, Corollary~\ref{cor:magic-est} gives for all $s\in (0,T)$
\[
\bigl\|s^{\frac{3}{4}}d(u\lrcorner\,b)(s)\bigr\|_{\frac{2n}{3}}\le 
C\bigl(\bigl\|\sqrt{s} \,\nabla u(s)\bigr\|_n \bigl\|s^{\frac{1}{4}}b(s)\bigr\|_{2n} 
+\bigl\|s^{\frac{1}{4}}u(s)\bigr\|_{2n}\bigl\|\sqrt{s} \,\delta\, b(s)\bigr\|_n\bigr)
\lesssim \bigl\|u\bigr\|_{{\mathscr{U}}_T}\bigl\|b\bigr\|_{{\mathscr{B}}_T}.
\]
To mimic the proof of the previous lemmas, we will use the properties of the Hodge-Laplacian stated in 
Corollary~\ref{cor:HodgeLaplacianImprovedRegularity} $(ii)$ inherited by the operator $M$. 
We have, with $p=\frac{2n}{3}$, $\alpha=1$ and $q=2n$,
\[
\sup_{t>0}\bigl\| t^{\frac{1}{2}}{\mathscr{M}}_{\frac{2n}{3}}(t){\mathbb{Q}}_{\frac{2n}{3}}
\bigr\|_{L^{\frac{2n}{3}}(\Omega,\Lambda^2)\to L^{2n}(\Omega,\Lambda^2)} < \infty
\] 
and with $p=\frac{2n}{3}$, $\alpha=\frac{1}{2}$ and $q=n$,
\[
\sup_{t>0}\bigl\|t^{\frac{3}{4}}d {\mathscr{M}}_{\frac{2n}{3}}(t){\mathbb{Q}}_{\frac{2n}{3}}
\bigr\|_{L^{\frac{2n}{3}}(\Omega,\Lambda^2)\to L^n(\Omega,\Lambda^3)} < \infty
\]
so that for all $t\in(0,t)$,
\[
\bigl\|{\mathscr{M}}_{\frac{2n}{3}}*{\mathbb{Q}}_{\frac{2n}{3}}\bigl(d(u\lrcorner\,b)\bigr)(t)\bigr\|_{2n}
\lesssim \Bigl(\int_0^t (t-s)^{-\frac{1}{2}}s^{-\frac{3}{4}}{\rm d}s\Bigr) 
\bigl\|u\bigr\|_{{\mathscr{U}}_T}\bigl\|b\bigr\|_{{\mathscr{B}}_T}
\lesssim t^{-\frac{1}{4}}\bigl\|u\bigr\|_{{\mathscr{U}}_T}\bigl\|b\bigr\|_{{\mathscr{B}}_T}
\]
and
\[
\bigl\|\delta {\mathscr{M}}_{\frac{2n}{3}}*{\mathbb{Q}}_{\frac{2n}{3}}\bigl(d(u\lrcorner\,b)\bigr)(t)\bigr\|_{n}
\lesssim \Bigl(\int_0^t (t-s)^{-\frac{3}{4}}s^{-\frac{3}{4}}{\rm d}s\Bigr) 
\bigl\|u\bigr\|_{{\mathscr{U}}_T}\bigl\|b\bigr\|_{{\mathscr{B}}_T}
\lesssim t^{-\frac{1}{2}}\bigl\|u\bigr\|_{{\mathscr{U}}_T}\bigl\|b\bigr\|_{{\mathscr{B}}_T}.
\]
This proves that $B_3(u,b)\in {\mathscr{B}}_T$ for all $u\in{\mathscr{U}}_T$ and $b\in {\mathscr{B}}_T$. Note that
the constant in the estimate $\|B_3(u,b)\|_{{\mathscr{B}}_T}\lesssim \|u\|_{{\mathscr{U}}_T}\|b\|_{{\mathscr{B}}_T}$ is independent 
of $T>0$.
The continuity of $B_3(u,b)$ on $[0,T)$ with values in ${\sf R}_n(\underline{d})_{|_{\Lambda^2}}$ follows from
 the continuity of $d( u\lrcorner\, b)$ on $(0,T)$ with values in
$L^{\frac{2n}{3}}(\Omega,\Lambda^2))$ and the strong continuity of the semigroup ${\mathscr{M}}$: the convolution
${\mathscr{M}}_{\frac{2n}{3}}*{\mathbb{Q}}_{\frac{2n}{3}}\bigl(d(u\lrcorner\,b)\bigr)$ is continuous on $(0,T)$ with values in 
${\mathbb{L}}_\sigma^n(\Omega)$ and we have the following estimate for all $t\in (0,T)$
\[
\bigl\|{\mathscr{M}}_{\frac{2n}{3}}*{\mathbb{Q}}_{\frac{2n}{3}}\bigl(d(u\lrcorner\,b)\bigr)(t)\bigr\|_n
\lesssim \Bigl(\int_0^t\frac{1}{(t-s)^{\frac{1}{4}}}\frac{1}{s^{\frac{3}{4}}}\, {\rm d}s\Bigr) 
\bigl\|u\bigr\|_{{\mathscr{U}}_T}\bigl\|b\bigr\|_{{\mathscr{B}}_T}
\lesssim \bigl\|u\bigr\|_{{\mathscr{U}}_T}\bigl\|b\bigr\|_{{\mathscr{B}}_T},
\]
the last constant in the estimate being independent of $t\in [0,T]$.
This proves the continuity of $B_3(u,b)$ on $[0,T)$ with values in ${\sf R}_n(\underline{d})_{|_{\Lambda^2}}$ for all
$u\in {\mathscr{U}}_T$, $b\in {\mathscr{B}}_T$.
\end{proof}


\end{document}